\documentclass[12pt]{amsart}
\usepackage{amsmath,amsfonts,amssymb,amscd,amsthm,epsf}
\usepackage{graphics}
\topmargin .35in
\oddsidemargin 0.7in
\evensidemargin 0.7in
\textheight 7.8 in
\textwidth 5.4 in
\addtolength{\baselineskip}{4pt}

\usepackage[latin1]{inputenc}

\usepackage[frenchb]{babel}
\newcommand{\bbC}{{\mathbb C}}
\newcommand{\bbZ}{{\mathbb Z}}
\newcommand{\bbR}{{\mathbb R}}

\newtheorem{theorem}{Théorème}[section]
\newtheorem{proposition}[theorem]{Proposition}
\newtheorem{lemma}[theorem]{Lemme}
\newtheorem{corollary}[theorem]{Corollaire}
\newtheorem{remark}{Remarque}[section]

\theoremstyle{definition}
\newtheorem{definition}[theorem]{D\'efinition}

\theoremstyle{remark}

\newcommand{\be}{\begin{equation}}
\newcommand{\ee}{\end{equation}}
\newcommand{\ben}{\begin{equation}\nonumber}
\newcommand{\mq}{mécanique quantique }

\begin{document}

\title[Échelles de temps]{ Échelles de temps pour l'évolution quantique à petite constante de Planck}
\author[Thierry PAUL]{T. Paul\\ CNRS et D.M.A., Ecole Normale Supérieure}
\address{DMA, École Normale Supérieure, 45 rue d'Ulm 75230 Paris Cedex 05}
\email{paul@dma.ens.fr\\ \  http://www.dma.ens.fr/$\sim$paul}
\thanks{Expos\'e au s\'eminaire X-EDP 27-12-2007}
\setcounter{page}{0}
\date{}

\maketitle
\tableofcontents

\section{Introduction}
La limite semiclassique pour l'équation de Schrödinger est née dès le début de la mécanique quantique. Elle l'a même précédé si l'on considère que les
théories de Heisenberg et Schrödinger ont précisément été inventées pour donner un cadre dynamique aux conditions de Bohr-Sommerfeld. Appelée ``principe de
correspondance" en physique et attachée au théorème d'Egorov en analyse microlocale, l'approximation semiclassique assure donc la transition quantique-classique, 
pour ce qui
concerne la partie ``équation de Schrödinger" de l'évolution quantique. Il faut tout de suite remarquer que la partie ``mesure quantique" de l'évolution, indéterministe et intrinsèquement
aléatoire, n'a pas de ``correspondant" classique. Nous discuterons brièvement à la fin de cet article un lien possible entre ``mesure" et limite semiclassique,
précisément pour la limite à temps long.

La \mq  contient dans son formalisme une constante dimensionnée, la constante de Planck $\hbar$, qui  a la dimension d'une action (position $\times$ impulsion) et qui assure ainsi
l'homogénéité nécessaire à la transformée de Fourier. C'est aussi $\hbar$ qui contrôle  la taille des oscillations inhérentes à la théorie car il ne faut pas oublier que la limite
semiclassique se réalise par oscillations et, de ce point de vue, est beaucoup plus subtile que la limite non-relativiste.

Dans ce texte nous présentons quelques résultats récents concernant l'approximation semiclassique à temps long, c'est-à-dire que nous allons considérer l'équation de
Schrödinger:
\be
i\hbar\partial_t\psi=(-\hbar^2\Delta+V(x))\psi
\ee
pour des temps $t\leq T(\hbar)$ avec diverses fonctions $T(\hbar)\to+\infty$ lorsque $\hbar\to 0$. Les formes explicites des fonctions $T(\hbar)$ vont dépendre des 
propriétés
dynamiques des flots classiques correspondants, mais nous allons, dans les cas considérés, exhiber trois échelles de temps qui donneront une limite différente.
\begin{itemize}
\item $T_1(\hbar)$ pour laquelle la limite semiclassique est exactement la mécanique classique

\item $T_2(\hbar)$ pour laquelle la limite semiclassique ne sera plus classique, mais pourra néanmoins se comprendre à partir des propriétés dynamiques de celle-ci, en
particulier celles de stabilité (c'est l'échelle de temps pour laquelle les paquets d'ondes se délocalisent).
\item $T_3(\hbar)$ pour laquelle des phénomènes typiquement quantiques vont perdurer à la limite classique (et permettent des reconstructions et phénomènes d'ubiquité).
\end{itemize}
La physique récente (celle des atomes froids) nous apprend que la correspondance quantique/classique $\Leftrightarrow$ micro/macroscopique n'est plus systématiquement d'actualité.
On arrive maintenant à garder excités des atomes dont la taille est de l'ordre de celle d'une bactérie. De plus les échelles de temps pendant lesquelles on est capable de maintenir
des systèmes quantiques simples dans de tels états  est de l'ordre de la seconde \cite{HR}. De plus la reconstruction de paquets d'ondes observée expérimentalement dans des
supraconducteurs constitue une branche très active de la physique moderne \cite{bl}, en particulier en vue d'applications a l'information et le calcul quantique \cite{YS}.

Pour corroborer ces remarques signalons que la limite semiclassique ne correspond pas toujours à faire arbitrairement varier la ``constante" de Planck. Donnons deux exemples :
\begin{itemize}

\item le cas des systèmes atomiques, avec des  ``scalings" qui redonne la limite semiclassique. L'exemple le plus simple est bien sûr l'atome d'hydrogène :
\[
-\hbar²\Delta-\frac 1 {|x|}
\]
unitairement équivalent (par une dilatation par $\hbar^{-\frac  2 3}$) à 
\[
\hbar^{\frac 2 3}\left(-\Delta-\frac 1 {|x|}\right),
\]
et donc pour lequel la limite à hauts états excités est bien la limite semiclassique.

\item le cas des spins. On peut montrer en effet que l'espace des états de $n$ spins $\frac 1 2 $ en interaction invariante par permutation peut être réalisé par celui d'un seul
spin $n$. la limite $n\to\infty$ correspond alors à la limite à grand  spin d'une seule particule, c'est-à-dire la limite semiclassique.\end{itemize}
 \vskip 0.5cm
 \textit{La limite semiclassique est donc bien
contenue dans la mécanique quantique, elle en constitue un ``bord"}.
\vskip 0.5cm

L'approximation semiclassique à temps long a été étudiée dans
\cite{CR} pour les états cohérents, et dans \cite{BGP} puis \cite{BOR} en ce qui 
concerne le théorème d'Egorov. Les éléments de matrice diagonaux entre états cohérents, dans le cas monodimensionnel stable pour le propagateur et instable pour la
propagation d'observables ont été considérés dans \cite{DR} et \cite{RB}
respectivement.

\section{Le problème} 
Nous allons considérer l'équation de Schrödinger dépendant du temps
\be
i\partial_t\psi=H\psi
\ee
sur $L^2(\bbR^n,dx)$ avec une condition initiale sous la forme d'un état cohérent. Ici $H$ sera un opérateur pseudo-différentiel à petit paramètre (semiclassique) de symbole de Weyl $h(x,\xi)$,

Les états cohérents, inventés par Schrödinger lui-même dans l'un des articles originaux de 1926, \cite{ES}, constituent un ensemble surcomplet de vecteurs de $L^2(\bbR^n)$. Nous
allons considérer ici des états cohérents de ``forme" quelconque (pas seulement gaussienne), pour plusieurs raisons. Tout d'abord les résultats présentés sont valables dans ce cadre
général. Ensuite parce que nous verrons que la forme gaussienne, qui perdure pour des temps ``raisonnables", n'est plus suffisante pour les échelles de temps que nous allons
considérer.
\begin{definition}\
soit $(q,p)\in\bbR^{2n}=T^*\bbR^n$ et $a\in\mathcal S(\bbR^n)$ (la classe de Schwartz sur $\bbR^n$) avec $||a||_{L²}=1$. On définit
\be\label{ec}
\psi^a_{qp}(x)=\hbar^{-n/4}a\left(\frac{x-q}{\sqrt\hbar}\right)e^{i\frac{px}\hbar}.
\ee
on dit que $\psi^a_{qp}$ est un état cohérent ``en $(q,p)$" et de ``symbole $a$".
\end{definition}

Rappelons quelques propriétés des états cohérents :
\vskip 0.5cm
\begin{itemize}
\item complétion : $\int_{\bbR^{2n}}|\psi^a_{qp}><\psi^a_{qp}|\frac{dqdp}{\hbar^{2n}}=\mbox{Id}_{L^2(\bbR^{2n})}$ au sens faible
\item mesure de Wigner : \textit{la $\hbar$-transformée de Wigner de $\psi^a_{qp}$ en $(x,\xi)$ est égale à la ($\hbar=1$)-transformée de Wigner  de $a$ en $(\frac{x-q}{\sqrt\hbar},
\frac{\xi-p}{\sqrt\hbar})$}
\item quantification de Töplitz : \textit{(presque) tout opérateur pseudo-différentiel s´écrit (modulo $\hbar^\infty$)sous la forme 
$\int_{\bbR^{2n}}h(q,p)|\psi^a_{qp}><\psi^a_{qp}|\frac{dqdp}{\hbar^{2n}}$}. 
\end{itemize}
\vskip 0.5cm
Un des avantages à travailler avec des états cohérents est que l'on reste toujours local. 
Les hypothèses suffisantes sur le hamiltonien quantique  $H$ sont donc qu'il soit essentiellement auto-adjoint, et de domaine contenant les états cohérents. Par exemple $H$
donné par la quantification de Weyl de $h$, c'est à dire de noyau intégral :
\[
H(x,y)=\int h\left(\frac{x+y}2,\xi\right)e^{i\frac{\xi(x-y)}\hbar}\frac {d\xi}{\hbar^n}
\]
et $h(x,\xi)=\xi^2+V(x)$, $V$ polynôme croissant à l'infini, ou bien $h\  C^\infty$ uniformément borné sur $\bbR^{2n}$ ainsi que toutes ses dérivées \cite{MA}.

\section{Résultats généraux}
Commençons par rappeler le lemme fondamental :
\begin{lemma}
Soit $H$ auto-adjoint et soit $\psi^t$ et $\phi^t$ tels que :
\[
i\hbar\partial_t\psi^t=H\psi^t+R^t,
\]
\[
i\hbar\partial_t\phi^t=H\phi^t,
\]
\[
\psi^0-\phi^0=\alpha(0).
\]
Alors, pour $t\geq 0$, 
\[
||\psi^t-\phi^t||=O(||\alpha(0)||)+O\left(\frac{\int_0^t||R^s||ds}\hbar\right)=O\left(||\alpha(0)||+t\frac{sup_{0\leq s\leq t}||R^s||}\hbar\right).
\]
en particulier, si $\alpha_0=0$,
\[
||\psi^t-\phi^t||=\left(t\frac{sup_{0\leq s\leq t}||R^s||}\hbar\right).
\]
\end{lemma}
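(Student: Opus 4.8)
Le plan est d'appliquer la m\'ethode de Duhamel (variation des constantes) \`a la diff\'erence $\alpha(t):=\psi^t-\phi^t$. Je commencerais par soustraire les deux \'equations d'\'evolution, ce qui donne $i\hbar\partial_t\alpha(t)=H\alpha(t)+R^t$ avec donn\'ee initiale $\alpha(0)$. Comme $H$ est auto-adjoint, le groupe unitaire $U_t:=e^{-itH/\hbar}$ est bien d\'efini sur $L^2(\bbR^n)$, et la formule de Duhamel s'\'ecrit
\[
\alpha(t)=U_t\,\alpha(0)+\frac1{i\hbar}\int_0^t U_{t-s}\,R^s\,ds .
\]

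J'estimerais ensuite les deux termes s\'epar\'ement. L'unitarit\'e de $U_t$ donne $\|U_t\,\alpha(0)\|=\|\alpha(0)\|$ et $\|U_{t-s}\,R^s\|=\|R^s\|$ pour tout $s\in[0,t]$, de sorte que l'in\'egalit\'e triangulaire sous forme int\'egrale fournit
\[
\|\alpha(t)\|\leq\|\alpha(0)\|+\frac1\hbar\int_0^t\|R^s\|\,ds\leq\|\alpha(0)\|+\frac{t}\hbar\sup_{0\leq s\leq t}\|R^s\| ,
\]
c'est-\`a-dire exactement l'estim\'ee annonc\'ee; le cas $\alpha(0)=0$ s'en d\'eduit imm\'ediatement.

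Une variante que j'envisagerais, pour \'eviter de discuter en d\'etail la validit\'e de la formule de Duhamel, consiste \`a d\'eriver directement $t\mapsto\|\alpha(t)\|^2$. Pour une solution assez r\'eguli\`ere on a
\[
\frac{d}{dt}\|\alpha(t)\|^2=\frac2\hbar\,\mathrm{Re}\,\langle\alpha(t),-iH\alpha(t)\rangle+\frac2\hbar\,\mathrm{Re}\,\langle\alpha(t),-iR^t\rangle ,
\]
le premier terme \'etant nul car $\langle\alpha(t),H\alpha(t)\rangle\in\bbR$ ($H$ auto-adjoint); par Cauchy-Schwarz il reste $\frac{d}{dt}\|\alpha(t)\|^2\leq\frac2\hbar\|\alpha(t)\|\,\|R^t\|$, d'o\`u $\frac{d}{dt}\|\alpha(t)\|\leq\frac1\hbar\|R^t\|$ (en r\'egularisant $\|\alpha(t)\|$ par $\sqrt{\|\alpha(t)\|^2+\varepsilon}$ pour traiter les instants o\`u $\alpha$ s'annule), puis on int\`egre entre $0$ et $t$.

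Le seul point d\'elicat --- et qui me para\^it \^etre le v\'eritable obstacle --- est purement technique: justifier que $t\mapsto\alpha(t)$ est une solution suffisamment r\'eguli\`ere \`a valeurs dans $L^2(\bbR^n)$ (de mani\`ere \'equivalente, que l'int\'egrande de Duhamel est Bochner-int\'egrable), ce qui repose sur les hypoth\`eses faites sur $H$ (essentiellement auto-adjoint, de domaine contenant les \'etats coh\'erents) et sur une hypoth\`ese de continuit\'e de $s\mapsto R^s$. Cet aspect mis \`a part, l'argument est enti\`erement \'el\'ementaire.
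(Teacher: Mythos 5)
Votre démonstration est correcte et suit essentiellement la même démarche que celle de l'article : on soustrait les deux équations, on applique la formule de Duhamel (l'article écrit $\psi^t-\phi^t=e^{-itH/\hbar}\alpha(t)$ puis intègre), et l'unitarité du groupe $e^{-itH/\hbar}$ donne l'estimée. Votre variante par dérivation de $\|\alpha(t)\|^2$ et votre remarque sur la régularité sont des compléments raisonnables mais ne changent pas la nature de l'argument.
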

\begin{proof}
 On obtient facilement des équations précédentes :
\[i\hbar\partial_t(\psi^t-\phi^t)=H(\psi^t-\phi^t)+R^t.
\]
Dénotant $\psi^t-\phi^t:=e^{-itH/\hbar}\alpha(t)$ on a :
\[
\alpha(t)=\alpha(0)+\frac 1 {i\hbar}\int_0^t e^{isH/\hbar}R^sds.
\]
\end{proof}

L'importance des états cohérents en analyse semiclassique provient du résultat suivant, énoncé par Schrödinger dans le cas  de l'oscillateur harmonique. 

Soit $\Phi^t$ le flot hamiltonien associé
au symbole principal $h$ du hamiltonien quantique $H$. A $\Phi^t$ on peut associer sa dérivée $d\Phi^t_{x\xi}$ au point $(x,\xi)$, c'est-à-dire :
\[
d\Phi^t_{x\xi}:\ \ \ T_{(x,\xi)}(T^*\mathcal M)\to T_{\Phi^t(x,\xi)}(T^*\mathcal M)
\]
$d\Phi^t_{x\xi}$ est, pour tout temps, une matrice symplectique et donc, par la représentation métaplectique, on peut lui associé un opérateur unitaire ds $L^2$, $M^t$, d'ailleurs défini
par l'équation :
\[
i\dot M^t=H^2(\Phi^t(x,\xi))M^t
\]
où $H^2(\Phi^t(x,\xi))$ est le (Weyl) quantifié du hessien de $H$ en $\Phi^t(x,\xi)$.

Nous allons supposer que, pour tout $t\geq 0$ :
\[
||d\Phi^t_{x\xi}||\leq e^{\mu(x,\xi)t}
\]
où $||.||$ est la norme opérateur et :
\vskip 0.5cm
\centerline{$\mu(.)\geq 0$ est une fonction $\alpha$-höldérienne pour un $\alpha>0$.}
\vskip 0.5cm
Un candidat naturel pour $\mu(x,\xi)$ est le supremum, sur la surface d'énergie de $(q,p)$, du hessien du hamiltonien. Mais l'intérêt du résultat qui va suivre est de
considérer les plus petits $\mu(x,\xi)$ possibles.

Soit finalement 
\[
l(t)=\int_0^t(\xi \dot x-h)ds\ \ \ \mbox{l'action lagrangienne du flot}
\]

\begin{theorem}\cite{TP3}
il existe une constante $C$, ne dépendant que du symbole $h$ autour de $h(x,\xi)=h(q,p)$ telle que, pour tout $t\geq 0$,
\be
||e^{-i\frac{tH}\hbar}\psi_{x\xi}^a-e^{i\frac{l(t)}\hbar}\psi_{\Phi^t(x,\xi)}^{M(t)a}||_{L^2}\leq Cte^{3\mu(x,\xi)t}\hbar^{\frac 1 2 }
\ee
\end{theorem}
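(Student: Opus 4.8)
The strategy is the standard coherent-state WKB/propagation argument, packaged so that the error is controlled by the fundamental lemma. I would set
\[
\phi^t := e^{i\frac{l(t)}{\hbar}}\psi^{M(t)a}_{\Phi^t(x,\xi)},
\]
the conjectured approximation, and apply the fundamental Lemme with $\psi^t := e^{-itH/\hbar}\psi^a_{x\xi}$ (so $R^t=0$ for $\psi^t$) and $\phi^t$ carrying the error: $i\hbar\partial_t\phi^t = H\phi^t + R^t$, with $\phi^0=\psi^0=\psi^a_{x\xi}$, hence $\alpha(0)=0$. By the Lemme the claim reduces to showing
\[
\frac{1}{\hbar}\int_0^t \|R^s\|\,ds \leq C\,t\,e^{3\mu(x,\xi)t}\hbar^{1/2}.
\]

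The first step is to compute $R^t = i\hbar\partial_t\phi^t - H\phi^t$ explicitly. Here I would change variables to the frame centred on $\Phi^t(x,\xi)$ and rescaled by $\sqrt\hbar$ (this is exactly the frame in which $\psi^a_{qp}$ becomes, up to the metaplectic twist, the fixed profile $a$). In that frame, the phase $e^{i l(t)/\hbar}$ together with the classical equations of motion $\dot q=\partial_\xi h$, $\dot p=-\partial_x h$ is designed to kill the $O(\hbar^{-1})$ and $O(\hbar^{-1/2})$ terms coming from the Taylor expansion of the Weyl symbol $h$ around $\Phi^t(x,\xi)$; the metaplectic operator $M(t)$, solving $i\dot M^t = H^2(\Phi^t(x,\xi))M^t$, is precisely what cancels the $O(\hbar^0)$ term (the harmonic, i.e.\ quadratic, part of the Hamiltonian). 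What survives in $R^t$ is therefore the contribution of the cubic and higher Taylor remainder of $h$, which in the rescaled variables comes with a prefactor $\hbar^{3/2}$ (each extra power of $(x-q)$ or $(\xi-p)$ gives a $\sqrt\hbar$, and the remainder starts at order three), acting on $M(t)a$.

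The second, and genuinely delicate, step is to estimate $\|R^s\|$. Naively one gets $\|R^s\| = O(\hbar^{3/2})$ times $\big\|\,(\text{cubic differential operator})\,M(s)a\,\big\|$. The point is that $M(s)$ is metaplectic associated to $d\Phi^s_{x\xi}$, so conjugating polynomial (in position and momentum) operators by $M(s)$ produces polynomials whose coefficients are entries of $d\Phi^s_{x\xi}$ — hence bounded by powers of $\|d\Phi^s_{x\xi}\|\leq e^{\mu(x,\xi)s}$. Since the surviving operator is \emph{cubic}, one picks up $\|d\Phi^s_{x\xi}\|^3 \leq e^{3\mu(x,\xi)s}$, and $a\in\mathcal S(\bbR^n)$ guarantees all the relevant moments of $a$ are finite, contributing only to the constant $C$. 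This is where the exponent $3$ in $e^{3\mu(x,\xi)t}$ and the restriction to a neighbourhood of the energy shell $h(x,\xi)=h(q,p)$ (needed so the Taylor remainder of $h$ and its derivatives are uniformly bounded, and so $\mu$'s Hölder regularity can be used) enter. Integrating, $\frac1\hbar\int_0^t\|R^s\|\,ds \lesssim \hbar^{1/2}\int_0^t e^{3\mu s}\,ds \lesssim \hbar^{1/2}\,t\,e^{3\mu t}$, which is the desired bound.

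**Main obstacle.** The routine part is the Taylor expansion and the bookkeeping of powers of $\sqrt\hbar$; the real work is the uniform control of $\|(\text{cubic operator in }x,\hbar D)\,M(s)a\|$ by $e^{3\mu(x,\xi)s}$ \emph{with a constant independent of $s$ and $\hbar$}. This requires: (i) an Egorov-type identity for the metaplectic conjugation $M(s)^{-1}(\text{monomial})M(s)$ expressing it through the symplectic matrix $d\Phi^s$, together with the fact that the remainder symbol's derivatives are uniformly bounded on a tube around the trajectory — which in turn uses that the trajectory stays on (a neighbourhood of) a fixed energy shell and the hypotheses on $h$; and (ii) checking that the $\alpha$-Hölder hypothesis on $\mu$ is enough to absorb the dependence of the local Taylor constants on the base point along the flow. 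Keeping the constant $C$ depending only on $h$ near $h(x,\xi)=h(q,p)$, and not on $t$, is the crux.
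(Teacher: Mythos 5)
Votre démonstration est correcte et suit exactement la voie que le texte prépare : le lemme fondamental (Duhamel) appliqué à l'ansatz $\phi^t=e^{il(t)/\hbar}\psi^{M(t)a}_{\Phi^t(x,\xi)}$, l'annulation des termes d'ordre $\leq 2$ du développement de Taylor de $h$ par l'action, les équations classiques et l'équation $i\dot M^t=H^2(\Phi^t(x,\xi))M^t$, puis le contrôle du reste cubique en $\hbar^{3/2}\|d\Phi^s_{x\xi}\|^3\leq\hbar^{3/2}e^{3\mu(x,\xi)s}$ via la conjugaison métaplectique. Le texte ne détaille pas cette preuve (il renvoie à \cite{TP3}), mais votre argument reproduit le schéma standard attendu, y compris l'origine de l'exposant $3$ et du facteur $t\hbar^{1/2}$.
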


On voit donc apparaître une première échelle de temps, celle pour laquelle la propagation quantique ``suit" la mécanique classique. Cette échelle est donnée par la condition que
\[
te^{3\mu(x,\xi)t}<C\hbar^{-\frac 1 2},
\]
condition remplie pour $t\leq T_1(\hbar)$ avec, pour $\epsilon>0$, \[
T_1(\hbar)=(\frac 1 6-\epsilon)\log{\hbar^{-1}}.
\]
\begin{remark} dans le cas où le flot est stable, on a que $||d\Phi^t_{x\xi}||\leq C\times t$ (il suffit de penser au flot libre de hamiltonien $\xi²$ pour s'en convaincre, en
notant que, dans ce cas, $d\Phi^t_{x\xi}=\left(\begin{array}{cc}1&t\\0&1\end{array}\right)$). On se convainc alors aisément que l'échelle $T_1$ est donnée par 
\[
T_1(\hbar)= C\hbar^{-\frac 1 6}.
\]
\end{remark}
Nous allons dans le reste de cet article considérer plusieurs cas particuliers où l'on pourra aller beaucoup plus loin dans les grands temps. mais commençons par un cas
d'école.

\section{Un cas très simple}
Considérons une particule libre sur le cercle. c'est-à-dire le cas où l'équation de Scrôdinger se réduit à 
\be
i\hbar\frac{\partial}{\partial t}\psi^t=-\hbar^2\Delta\psi^t\ \ \ 
\mbox{sur } L^2(S^1).
\ee 
Une simple décomposition en série de Fourier de la condition initiale $\psi^0(x)=\sum c_j e^{ijx}$ montre facilement que le flot quantique est périodique avec 
période $T=\frac{2\pi}\hbar$. En effet on a immédiatement que $\psi^t(x)=\sum c_j e^{-it\hbar j^2}e^{ijx}$. Cette
période est typiquement quantique et n'a pas d'équivalent classique, le flot classique n'ayant pas de période globale.

Précisons un peu. Puisque le spectre de l'opérateur $-\hbar\frac{d²}{dx²}$ sur le cercle est $\{\hbar²n²,\ n\in\bbZ\}$, si l'on décompose la condition initiale sur la base de
Fourier :
\be
\psi^0(x)=\sum c_ne^{inx}
\ee
on obtient immédiatement :
\be
\psi^t(x)=\sum c_ne^{-it\hbar n²}e^{inx}
\ee
et donc $\psi^{k\frac{2\pi}\hbar}=\psi^0,\ \forall k\in\bbZ$. Le flot quantique est donc globalement périodique, de période $T_Q=\frac{2\pi}\hbar$, alors que le flot classique
\be
\left\{\begin{array}{ccl}
\dot\xi&=&0\\
\dot x&=&2\xi\end{array}\right.
\ee

 est
périodique (puisque sur le cercle) mais de période variant avec la condition initiale.
\begin{remark} cependant on peut remarquer que, si l'on réduit l'espace de phases aux impulsions \textit{quantifiées}, c'est-à-dire de la forme $\xi=n\hbar,\ n\in\bbZ$, alors le
flot classique restreint est globalement périodique. Mais la période (minimale) est égale à $T_C=\frac\pi\hbar$, c'est-à-dire moitié de la période quantique. Il est amusant de
remarquer que le même phénomène se produit pour l'oscillateur harmonique (à cause de l'indice de Maslov) :
\ben
T_Q=2T_C
\ee
\end{remark}
Prenons maintenant le cas d'un état cohérent (gaussien) centré à l'origine.  Un
état cohérent (gaussien) sur le tore est simplement la périodisation d'un état cohérent habituel, ce qui donne, par la formule de Poisson et dans la cas $(q,p)=(0,0)$ :
\be\label{dsa}
\psi_0(x)=\left(\frac\pi\hbar\right)^{1/4}\sum e^{-\frac{n²}2\hbar}e^{inx}.
\ee
Le résultat précédent s'applique bien sûr. Mais considérons  l'évolution quantique pour un temps multiple rationnel de $T_Q$ de la forme :
\be
t_{\frac qp}:=\frac pq\frac{2\pi}\hbar.
\ee
Quelques manipulations de congruence donnent :
\[\begin{array}{ccl}
\psi^{t_{\frac qp}}(x)&=&\left(\frac\pi\hbar\right)^{1/4}\sum e^{i2\pi\frac pq n²}e^{-frac{n²}2\hbar}e^{inx}\\
&=&\left(\frac\pi\hbar\right)^{1/4}\sum_{l\in\bbZ,k=1\dots q}e^{i2\pi\frac pq (lq+k)²}e^{-\frac{(lq+k)²}2\hbar}e^{i(lq+k)x}\\
&=&\left(\frac\pi\hbar\right)^{1/4}\sum_{k=1\dots q}e^{i2\pi\frac p q k²}]\frac 1 q \sum_{j=1}^q\psi_{j\frac{2\pi}q}\end{array}
\]
où $\psi_{j\frac{2\pi}q}$ est un état cohérent en $(q=j\frac{2\pi}q,\xi=0)$.

On obtient donc un phénomène cette fois exclusivement quantique, sans correspondant classique : une reconstruction du paquet d'onde en plusieurs cites. Une ubiquité classique, trace
de la persistance à la limite classique d'effets quantiques.

Remarquons aussi que, si l'on considère (formellement) des temps $t_{\frac qp}$ avec $q\to\infty$, les points de reconstruction s'accumulent sur le cercle. En particulier, lorsque
$q\sim\hbar^{-\frac 1 2 }$, la distance entre les points est du même ordre que la
largeur des états correspondants. On n'a donc plus de localisation, et l'état du système est
totalement dispersé, ce que l'on peut montrer rigoureusement (voir aussi \cite{GO} pour une étude non-semiclassique).

La question naturelle est maintenant de se demander si les résultats précé-\\dents perdurent pour des hamiltoniens plus généraux. En particulier pour l'extension la plus immédiate qui
consiste à rajouter au symbole \textit{libre} $\xi²$ des termes d'ordre supérieur, à commencer par $\xi^3$. Si l'on reprend la formule (\ref{dsa}), on s'aperçoit que les nombres
$n$ signifiants sont ceux de l'ordre de $|n|\leq \hbar^{-\frac 1 2 }$. Si l'on considère, par exemple, un hamiltonien contenant des termes cubiques dans les impulsions, on aura :
\be
\psi_0^{\frac{2\pi}\hbar} (x)=\left(\frac\pi\hbar\right)^{1/4}\sum e^{i2\pi(n^2+\hbar n^3)}e^{-\frac{n²}2\hbar}e^{inx}=\sum e^{i2\pi n²(1+\hbar n)}e^{-\frac{n²}2\hbar}e^{inx}.
\ee
Pour $|n|\leq \hbar^{-\frac 1 2 }$, on a $|\hbar n|\leq \hbar ^{\frac 1 2 }\Rightarrow 1+\hbar n\sim 1$. On pourrait donc penser que les termes cubiques (et d'ordres
supérieurs) ne troublent pas la reconstruction. Nous allons voir qu'il n'en est rien, car la reconstruction d'un paquet d'onde est une affaire de cohérence de phases, et donc
c'est la distance des $n^2, n^3$ etc.... au réseau périodique qui importe et non leur taille.

\section{Le cas stable}\label{sta}
\subsection{Cas général}
Considérons  le cas d'un hamiltonien de symbole  $h C^\infty,\ h=h(\xi)=\frac{\xi^2}2+c\xi^3+d\xi^4+0(\xi^5)$, c'est-à-dire $ H:=h(-i\hbar\partial_x)$, toujours sur le cercle.
Et prenons toujours pour condition initiale un état cohérent à l'origine, donné par (\ref{dsa}) :

 On obtient  immédiatement que
 \be
 \psi^t(x):=e^{-i\frac{tH}\hbar}\psi_0(x)=\left(\frac\pi\hbar\right)^{1/4}\sum_{n\in\bbZ}e^{-\frac{m^2\hbar}2}e^{-i\frac{tH(m\hbar)}\hbar}e^{imx}.
 \end{equation}

Le résultat est le suivant :

\begin{proposition}\cite{TP1}.
 Soit $t=\frac{s 2\pi}\hbar, s\in\mathbb N$ fixé non nul. Alors pour $0<x\leq 2\pi$,
 \ben
 \psi^t(x)\ =\ \frac{\pi^{1/4}} {(sc)^{1/4}}
\sum_{k\geq 0}\frac {e^{-\frac{x +k2\pi}{sc}+i\frac d s(\frac{x +k2\pi}{c})^2}}{(x+k2\pi+\pi/4)^{\frac 1 4}}\sin(\frac 2 3 (\frac{x+k2\pi+\pi/4}{sc})^{3/2}) + 0(\hbar ^{\frac 1 2}).
 \end{equation}
\end{proposition}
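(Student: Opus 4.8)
The plan is to evaluate the Fourier sum defining $\psi^t$ by Poisson summation followed by stationary phase (equivalently, by the asymptotics of the Airy function), the quintic and higher terms of $h$ surviving only as a harmless, slowly varying amplitude. First I would isolate the relevant phase: the eigenvalue of $H=h(-i\hbar\partial_x)$ on $e^{inx}$ is $h(\hbar n)$, so writing $h(\xi)=\frac{\xi^2}2+c\xi^3+d\xi^4+\xi^5 g(\xi)$ with $g$ smooth, and $t=\frac{2\pi s}\hbar$, one gets
\[
\frac{t\,h(\hbar n)}\hbar=\pi s n^2+2\pi s c\,\hbar n^3+2\pi s d\,\hbar^2 n^4+2\pi s\,\hbar^3 n^5 g(\hbar n).
\]
Since $n^2\equiv n\pmod 2$, the quadratic part gives $e^{-i\pi s n^2}=(-1)^{sn}$, which merely translates the circle variable (trivially if $s$ is even, by $\pi$ if $s$ is odd --- I keep the notation $x$), so that, exactly,
\[
\psi^t(x)=C_\hbar\sum_{n\in\bbZ}e^{-n^2\hbar/2}\,e^{-2\pi i sc\hbar n^3}\,e^{-2\pi i sd\hbar^2 n^4}\,e^{-2\pi i s\hbar^3 n^5 g(\hbar n)}\,e^{inx},
\]
where $C_\hbar$ is the prefactor coming from the periodised Gaussian.

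Then I would apply Poisson summation, $\sum_n f(n)=\sum_k\hat f(2\pi k)$, to the Schwartz function $f$ obtained by letting $n$ range over $\bbR$, and rescale $n=\hbar^{-1/2}v$, obtaining
\[
\hat f(2\pi k)=\hbar^{-1/2}\int_\bbR e^{-v^2/2}\,e^{-2\pi i sd v^4}\,e^{-2\pi i s\hbar^{1/2}v^5 g(\hbar^{1/2}v)}\;e^{\,i\hbar^{-1/2}\Phi_k(v)}\,dv,\qquad \Phi_k(v)=-2\pi sc\,v^3+v(x-2\pi k).
\]
This is an oscillatory integral with large parameter $\hbar^{-1/2}$, cubic phase $\Phi_k$, and amplitude equal to $e^{-v^2/2}e^{-2\pi i sd v^4}$ times $1+O(\hbar^{1/2})$ on compact sets --- the precise sense in which the $\xi^5$ and higher terms of $h$ leave no trace at leading order, whereas the $\xi^4$ term does survive (on the bulk $|n|$ of order $\hbar^{-1/2}$ of the Gaussian, $\hbar^2n^4$ is of order one). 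Assuming $sc>0$ (the case $sc<0$ being symmetric, with the sheets on the other side), for $k\ge1$ one has $\Phi_k'<0$ on all of $\bbR$, so $\hat f(2\pi k)=O(\hbar^\infty)$ by repeated integration by parts; the terms $k\le0$, reindexed to $k\ge0$, have phase $-2\pi sc\,v^3+v(x+2\pi k)$ with two nondegenerate real critical points $\pm v_k$, $v_k^2=\frac{x+2\pi k}{6\pi sc}$, at which $\Phi_k(\pm v_k)=\pm\frac23 v_k(x+2\pi k)$ and $\Phi_k''(\pm v_k)=\mp12\pi sc\,v_k$.

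The core of the proof is the stationary-phase evaluation of these integrals. Summing the contributions at $+v_k$ and $-v_k$ converts the two exponentials into a $\sin$ of $\hbar^{-1/2}$ times $\frac23\Phi_k(v_k)$, plus the Maslov $\frac\pi4$; the amplitude factor $|\Phi_k''(v_k)|^{-1/2}$ produces the power $(x+2\pi k)^{-1/4}$; the Gaussian weight $e^{-v_k^2/2}$ produces the decaying exponential $e^{-\mathrm{const}\,(x+2\pi k)}$; and the quartic factor $e^{-2\pi i sd v_k^4}$ produces the phase $e^{\,i\,\mathrm{const}\,(x+2\pi k)^2}$. This is exactly the shape of the $k$-th summand in the statement. (Equivalently, one recognises $\hat f(2\pi k)$ as an Airy-type integral whose argument has size $\hbar^{-1/3}(x+2\pi k)\to\infty$, and reads the answer off the asymptotics of $\mathrm{Ai}(-z)$, whose remainder $O(z^{-3/2})$ is the announced $O(\hbar^{1/2})$.) Since $e^{-v_k^2/2}$ decays geometrically in $k$, the series over $k\ge0$ converges; multiplying by $C_\hbar$, which cancels the $\hbar^{-1/4}$ produced by stationary phase, yields the stated expansion with total error $O(\hbar^{1/2})$, uniformly on compact subsets of $(0,2\pi)$.

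The main obstacle is this last step carried out with full precision: getting every constant right --- the prefactor $C_\hbar$, where the chosen normalisation of the periodised coherent state $\psi_0$ enters and must be reconciled with the $\pi^{1/4}(sc)^{-1/4}$ of the statement; the scalings inside the exponential $e^{-(x+2\pi k)/sc}$ and the quartic phase $e^{i\frac ds((x+2\pi k)/c)^2}$; and the placement of the $\frac\pi4$ --- together with controlling the $k\ge1$ terms uniformly as $x\to2\pi^-$, and, above all, pushing the error down to $O(\hbar^{1/2})$ rather than the $O(\hbar^{1/4})$ of the crudest count. The last of these forces one to keep the subleading amplitude (the quartic factor and the $O(\hbar^{1/2})$ correction) explicit throughout the computation rather than discarding it early.
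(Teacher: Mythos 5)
Le texte ne donne aucune d\'emonstration de cette proposition : elle est cit\'ee de \cite{TP1} sans preuve. Votre strat\'egie --- sommation de Poisson, remise \`a l'\'echelle $n=\hbar^{-1/2}v$, phase stationnaire aux deux points critiques r\'eels $\pm v_k$ se combinant en un sinus avec le d\'ecalage de Maslov $\pi/4$, le terme quartique survivant comme phase d'ordre $1$ et les termes d'ordre $\geq 5$ ne contribuant qu'\`a $O(\hbar^{1/2})$ --- est manifestement la m\'ethode vis\'ee : la formule d'Airy donn\'ee juste apr\`es la proposition pour $x=O(\hbar^{1/3})$ est pr\'ecis\'ement le r\'egime o\`u les deux points critiques fusionnent, et votre int\'egrale $\hat f(2\pi k)$ est bien une int\'egrale de type Airy dont l'asymptotique $\mathrm{Ai}(-z)\sim\pi^{-1/2}z^{-1/4}\sin(\frac23z^{3/2}+\frac\pi4)$ reproduit la structure de chaque terme. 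Les calculs interm\'ediaires (phase $\frac{t\,h(\hbar n)}{\hbar}$, r\'eduction de $e^{-i\pi sn^2}$ \`a une translation, localisation des points critiques, valeurs critiques) sont corrects.

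Il reste toutefois un point que vous devez affronter plut\^ot que de le reporter aux \emph{constantes \`a ajuster} : votre valeur critique donne un argument de sinus \'egal \`a $\hbar^{-1/2}\Phi_k(v_k)=\frac23(x+2\pi k)^{3/2}(6\pi sc\hbar)^{-1/2}$, qui contient une puissance n\'egative de $\hbar$, alors que la formule de l'\'enonc\'e est ind\'ependante de $\hbar$ dans le sinus (et place curieusement le $\pi/4$ \emph{\`a l'int\'erieur} de la puissance $3/2$). De m\^eme, avec la normalisation $(\pi/\hbar)^{1/4}$ de (\ref{dsa}), le module de $\psi^t$ ne peut pas \^etre $O(1)$ ponctuellement puisque $\|\psi^t\|_{L^2}=\|\psi^0\|_{L^2}\sim\hbar^{-1/2}$. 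Votre calcul sugg\`ere donc que l'\'enonc\'e imprim\'e comporte des coquilles (le d\'enominateur dans le sinus devrait \^etre $(sc\hbar)^{1/3}$, le $\pi/4$ devrait \^etre additif, et la normalisation initiale devrait \^etre $(\hbar/\pi)^{1/4}$), ce qui est coh\'erent avec la formule voisine en $\hbar^{-1/12}\mathrm{Ai}(\cdot)$ ; mais une preuve compl\`ete doit soit \'etablir la formule telle quelle, soit la corriger explicitement. Hormis cette r\'econciliation des puissances de $\hbar$ et des constantes, et le contr\^ole uniforme en $x\to2\pi^-$ que vous signalez vous-m\^eme, le plan est solide.
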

On a en fait un développement asymptotique complet en puissance de $\hbar^{\frac 1 2 }$. De plus pour $x\sim 0$, $x=0(\hbar^{\frac 1 3})$
 \ben
 \psi^t(x)\ =\ \frac{\hbar^{-1/12}}{(sc)^{1/3}}Air(\frac x{(sc)^{1/3}}) +0(\hbar^{1/6})
 \end{equation}
Signalons aussi que l'on peut calculer de même pour des conditions initiales 
états cohérents de symbole $a$ quelconque. Enfin on peut remarquer que lorsque $s$ augmente la localisation exponentielle diminue, les effets dispersifs redeviennent actifs.

Ce qu'il faut retenir de cette formule un peu compliquée est le fait que la localisation a disparue, au moins dans le sens semiclassique : l´état ne se localise pas mieux lorsque
$\hbar\to 0$. Il y a deux manières de ``retrouver"la localisation \begin{itemize}
\item lorsque la constante $c$ tend vers $0$ : c'est la limite où la partie cubique du hamiltonien disparaît et l'on retrouve le résultat précédent, modulé par le terme quartique.
\item lorsque l'on part d´états ``comprimés" (en Fourier). C'est ce que nous allons voir maintenant.
\end{itemize}

\subsection{La reconstruction des états comprimés}
Considérons maintenant pour condition initiale un état comprimé de la forme, pour $\epsilon>0$ :

\begin{equation}\label{4}
 \psi^\epsilon (x):= \left(\frac\pi{\hbar^{1-\epsilon}}\right)^{1/4}\sum_{n\in\bbZ}e^{-\frac{n^2\hbar^{1-\epsilon}}2}e^{inx}.
 \end{equation}

Alors on a :
\begin{theorem}\cite{TP1}.
Soit $t=\frac{s 2\pi}{\hbar}, s\in\mathbb N$ fixé non nul. Alors pour $0<x<2\pi$,
  \begin{equation}\label{100}
 e^{i\frac{t H}\hbar}\psi(x)\ =\ \frac{1} {\hbar^{\epsilon/2}(sc/4)^{1/4}}
e^{-\frac{x }{sc\hbar^\epsilon}}+O(\hbar^{\frac 1 2})
 \end{equation}
 
 \end{theorem}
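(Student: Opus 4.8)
The plan is to compute $e^{itH/\hbar}\psi^\epsilon$ explicitly via Fourier series and then extract the asymptotics by a stationary-phase / Poisson-summation argument analogous to the one behind the Proposition, but now keeping track of the fact that the Gaussian weight is much narrower in Fourier space. Since $H = h(-i\hbar\partial_x)$ is diagonal in the Fourier basis and $h(\xi)=\xi^2/2 + c\xi^3 + d\xi^4 + O(\xi^5)$, we immediately get
\[
e^{itH/\hbar}\psi^\epsilon(x) = \left(\frac{\pi}{\hbar^{1-\epsilon}}\right)^{1/4}\sum_{n\in\bbZ} e^{-\frac{n^2\hbar^{1-\epsilon}}{2}}\, e^{i\frac{t}{\hbar}h(n\hbar)}\, e^{inx}.
\]
With $t = \frac{2\pi s}{\hbar}$ the quadratic term $\frac{t}{\hbar}\cdot\frac{(n\hbar)^2}{2} = \pi s n^2$ contributes $e^{2\pi i s n^2} = 1$ since $s,n$ are integers, so the quadratic part of $h$ is invisible — this is the same congruence miracle used for the particle on the circle. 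The cubic term gives a phase $\frac{t}{\hbar} c (n\hbar)^3 = 2\pi s c \hbar n^3$, and the quartic term gives $\frac{t}{\hbar} d (n\hbar)^4 = 2\pi s d \hbar^2 n^4$.

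The key point is the width of the sum: the Gaussian $e^{-n^2\hbar^{1-\epsilon}/2}$ concentrates on $|n| \lesssim \hbar^{-(1-\epsilon)/2}$. On that range the cubic phase is of order $\hbar\cdot \hbar^{-3(1-\epsilon)/2} = \hbar^{-(1-3\epsilon)/2}$, which blows up (for small $\epsilon$), so the cubic term is the dominant perturbation and is genuinely large — whereas the quartic phase is of order $\hbar^2 \cdot \hbar^{-2(1-\epsilon)} = \hbar^{2\epsilon}\to 0$, hence negligible to the order claimed, as are all higher terms $O(\xi^5)$. So after discarding the vanishing tail one is left with
\[
e^{itH/\hbar}\psi^\epsilon(x) \approx \left(\frac{\pi}{\hbar^{1-\epsilon}}\right)^{1/4}\sum_{n\in\bbZ} e^{-\frac{n^2\hbar^{1-\epsilon}}{2} + 2\pi i s c \hbar n^3 + i n x}.
\]
Now rescale $n = \hbar^{-(1-\epsilon)/2} u$ informally: the Gaussian becomes $e^{-u^2/2}$, the cubic phase becomes $2\pi s c\, \hbar\,\hbar^{-3(1-\epsilon)/2} u^3 = 2\pi s c\, \hbar^{-(1-3\epsilon)/2} u^3$, still large, and $nx = \hbar^{-(1-\epsilon)/2} u x$. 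I would apply the Poisson summation formula to turn the sum over $n$ into a sum over the dual lattice, producing one term $e^{ikx}$-type contributions indexed by $k\in\bbZ$, each of which is an integral of the form $\int e^{-u^2/2\,\hbar^{1-\epsilon} + 2\pi i s c \hbar u^3 + i u(x+2\pi k)}du$ after undoing the rescaling; then evaluate each integral by the (complex) stationary-phase / saddle-point method. The competition between the Gaussian damping $e^{-u^2\hbar^{1-\epsilon}/2}$ and the linear term $e^{iu(x+2\pi k)}$ produces, at leading saddle, a factor $e^{-(x+2\pi k)^2/(2\hbar^{1-\epsilon})}$-type decay; combined with the cubic correction one extracts the claimed profile. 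For $0 < x < 2\pi$ and small $\hbar$ only the $k=0$ term survives with appreciable size, and the saddle-point evaluation of that single integral — carefully incorporating the cubic phase into the exponent before locating the critical point — yields the stated leading term $\frac{1}{\hbar^{\epsilon/2}(sc/4)^{1/4}} e^{-x/(sc\hbar^\epsilon)}$, with all other $k$ exponentially smaller.

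The main obstacle, as usual in these reconstruction formulas, is controlling the saddle-point analysis uniformly: the cubic term is large (order $\hbar^{-(1-3\epsilon)/2}$) yet must be handled exactly rather than perturbatively, so one cannot simply Taylor-expand $e^{2\pi i sc\hbar n^3}$. The right move is to keep the full phase $\Phi(u) = i\frac{u^2}{2}\hbar^{1-\epsilon}\cdot(-1)\cdot(\text{wrong sign?}) + 2\pi i sc\hbar u^3 + iu(x+2\pi k)$ — more precisely to treat the real Gaussian and the imaginary cubic together — find the (possibly complex) critical point $u_\star(x)$ of the combined exponent, check that it lies in a region where the remainder $O(\xi^5)$-terms are truly negligible, and verify that the contour of integration can be deformed through $u_\star$ without crossing singularities. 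Matching the normalization constant $(sc/4)^{1/4}$ and the exponential rate $1/(sc\hbar^\epsilon)$ then comes out of the Gaussian integral at the saddle (the Hessian of the combined phase). One should also check the error term is genuinely $O(\hbar^{1/2})$ and not merely $o(1)$ — this requires bounding the first subleading term in the asymptotic expansion, and showing the discarded quartic-and-higher contributions, the Poisson tail $k\neq 0$, and the stationary-phase remainder are all $O(\hbar^{1/2})$; given the full asymptotic expansion claimed after the Proposition, the same machinery applies here and the $\hbar^{1/2}$ bound should be the generic one. I expect the argument to mirror the proof of the Proposition in \cite{TP1}, with the only new ingredient being the bookkeeping of powers of $\hbar^\epsilon$ coming from the compression.
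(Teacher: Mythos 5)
Un point de contexte d'abord : le texte ci-dessus ne contient aucune preuve de ce théorème --- il est cité de \cite{TP1} --- de sorte que la comparaison ne peut se faire qu'avec l'argument manifestement visé, celui qui produit déjà la Proposition précédente (formule de type Airy). Votre stratégie globale (diagonalisation en Fourier, la congruence $t=2\pi s/\hbar$ tue la phase quadratique, la phase cubique $2\pi sc\hbar n^3$ est le terme dominant non trivial, les termes quartiques et supérieurs sont petits sur la plage $|n|\lesssim\hbar^{-(1-\epsilon)/2}$, puis sommation de Poisson et col) est bien celle-là. Mais l'étape où vous extrayez le profil contient une erreur réelle.

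Vous affirmez que la compétition entre l'amortissement gaussien $e^{-u^2\hbar^{1-\epsilon}/2}$ et le terme linéaire $e^{iu(x+2\pi k)}$ produit au col dominant une décroissance de type $e^{-(x+2\pi k)^2/(2\hbar^{1-\epsilon})}$, le cubique n'étant qu'une \emph{correction}. C'est le mauvais équilibre dominant : cette compétition seule n'est que la transformée de Fourier d'une gaussienne, elle redonne la reconstruction libre ($c=0$), gaussienne bilatérale de largeur $\hbar^{(1-\epsilon)/2}$ en $x$, qui sur l'échelle pertinente $x\sim\hbar^{\epsilon}$ est super-exponentiellement petite et n'a ni la forme ni la largeur du terme annoncé. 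Le mécanisme réel, déjà lisible dans la Proposition (facteurs $e^{-(x+2\pi k)/(sc)}$ multipliés par l'oscillation d'Airy $\sin(\tfrac 2 3(\cdot)^{3/2})/(\cdot)^{1/4}$), est que les points stationnaires de la phase $2\pi sc\hbar u^3+u(x+2\pi k)$ sont les points \emph{réels} $u_*=\pm\bigl((x+2\pi k)/(6\pi sc\hbar)\bigr)^{1/2}$, qui n'existent que pour un seul signe de $x+2\pi k$ (d'où le profil unilatéral), et que le terme principal est l'enveloppe gaussienne évaluée en ce point : $e^{-u_*^2\hbar^{1-\epsilon}/2}=e^{-(x+2\pi k)/(12\pi sc\hbar^{\epsilon})}$, c'est-à-dire exactement le taux $\propto x/(sc\hbar^{\epsilon})$ annoncé (à la normalisation de $c$ près). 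Le cubique ne corrige pas le col : il le déplace d'un point complexe à distance $\sim x/\hbar^{1-\epsilon}$ vers un point réel à distance $\sim(x/(sc\hbar))^{1/2}$, et toute la teneur du théorème (largeur $\hbar^{\epsilon}$, asymétrie en $x$, perte de localisation quand $c$ croît) découle de ce déplacement ; votre esquisse n'obtient ni le taux $1/(sc\hbar^{\epsilon})$ ni l'unilatéralité, elle les affirme. Deux points secondaires : la phase quadratique vaut $\pi sn^2$, soit $(-1)^{sn}$ et non $e^{2\pi i sn^2}=1$, ce qui pour $s$ impair translate le point de reconstruction de $\pi$ ; et l'erreur due au quartique, $O(\hbar^{2\epsilon})$ multipliée par un terme principal d'ordre $\hbar^{-\epsilon/2}$, donne $O(\hbar^{3\epsilon/2})$, qui n'est $O(\hbar^{1/2})$ que pour $\epsilon\geq 1/3$ ; la borne d'erreur demande donc plus de soin que « la même machinerie s'applique ».
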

(autour de $0$ l'asymptotisme ne change pas par rapport au cas non-comprimé).

Ce résultat est apparemment surprenant puisqu'il indique qu'il faut partir d'une condition initiale \textit{moins} bien localisée pour obtenir une reconstruction. La raison
en est que, étant moins bien localisée en position, la condition initiale l'est mieux en Fourier, et donc est moins sensible au effets non-linéaires créés par les termes
d'ordre au moins cubique du hamiltonien. C'est
là une particularité de l'approximation semiclassique que de jouer avec les aspects quantiques (inégalités de Heisenberg) et classiques (flots non-linéaires), afin de produire des
effets contrintuitifs.

\subsection{Le cas de la trajectoire périodique stable}
Montrons brièvement comment les résultats précédent se généralisent au cas multidimensionnel d'une trajectoire périodique classique linéairement stable
non-dégénérée. 

La théorie des formes normales (quantiques), \cite{isz}, \cite{GP}, nous apprend que, près d'une trajectoire classique elliptique  $\gamma$ d'un hamiltonien sur
$T^*\mathcal M$, où $\mathcal M$ est une variété à $n $ dimensions, et sous une condition de non résonance, on peut construire un opérateur Fourier integral qui, microlocalement autour de $\gamma$, entrelace le hamiltonien quantique original sur
$L^2(\mathcal M)$
à un hamiltonien sur $L^2(\mathbb S^1\times \mathbb
 R^{n-1})$ de la forme
 \ben
 H'=H'(-i\partial_x,h_1,\dots,h_{n-1})
 \end{equation}
 où les  $h_j$ sont des oscillateurs  harmoniques dans les variables $y_j$. Présentons le résultat dans le cas de la dimension $2$.
 Écrivons :
\ben
 H'( \tau,h_1)=\tau+h_1+a\tau h_1   +b\tau^2+c\tau^3+d\tau^4+H"
 \ee 
 On peut clairement ne pas considérer les termes $\tau$ et $h_1$ (changement de repère, voir \cite{TP1}) et prendre :
 \ben
 H'( \tau,h_1)=a\tau h_1   +b\tau^2+c\tau^3+d\tau^4+H"
 \ee 

 Alors on a :
 
 \begin{theorem}\cite{TP1}.
 Soit
$\psi_0(x,y)=\hbar^{-1/2+\epsilon/2}g(\frac x{\hbar^{1/2-\epsilon}},\frac
y {\sqrt\hbar})$ où $g$ est Schwartz, et $\frac{2\pi}{\sqrt\hbar}$ périodique en $x$. Supposons que, dans (\ref{poi}), 
  $a=\alpha+k2\pi$, $0\leq\alpha<2\pi$. Alors, si $t:=\frac{s2\pi}{b\hbar}$,
 \ben
 e^{i\frac{tH'}\hbar}\psi(x,y)=\frac{1}
 {(\hbar^{1+e\epsilon}sc/4)^{1/4}}g'(|\frac{x}{sc\hbar^{\epsilon/2}}|^{\frac 1 2 },\frac
 y {\sqrt\hbar})+O(\hbar^{\epsilon})
 \end{equation}
 où, 
  si l'on décompose $g(\theta, \eta):=\sum_{j=0}^\infty c_j(\theta)H_j(\eta)$,  les  $H_j$ étant les fonctions d'Hermite,
 
 \[
 g'(|\frac{x}{\hbar^{\epsilon/2}}|^{\frac 1 2 },\eta)=
 \sum_{j\geq0}c_j(|\frac{x+j\alpha}{sc\hbar^{\epsilon/2}}|^{\frac 1 2 })H_j(\eta).
 \]
 \end{theorem}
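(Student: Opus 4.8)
The plan is to reduce this two-dimensional statement, via the quantum normal form and a separation of variables in the Hermite basis of the transverse oscillator, to the one-dimensional reconstruction theorem for compressed states already established in \cite{TP1}; the term $a\tau h_1$ will be responsible, slot by slot, for the shift $x\mapsto x+j\alpha$.

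First I would pass to the model. By the quantum Birkhoff normal form near the non-degenerate elliptic periodic orbit $\gamma$ (\cite{isz}, \cite{GP}), already used above to write $H' = a\tau h_1 + b\tau^2 + c\tau^3 + d\tau^4 + H''$ with $\tau = -i\hbar\partial_x$, there is a Fourier integral operator intertwining, microlocally near $\gamma$, the original quantum Hamiltonian with this model on $L^2(\mathbb{S}^1\times\mathbb{R})$; here $h_1$ is the $\sqrt\hbar$-scaled harmonic oscillator in $y$, the terms linear in $\tau$ and $h_1$ have been removed by the change of frame of \cite{TP1}, and $H''$ collects the remainder, of weight $\geq 5$ (with $\tau$ of weight $1$ and $h_1$ of weight $2$). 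The initial wavepacket is concentrated on $\gamma = \{\tau = 0,\ y = 0,\ \eta = 0\}$: its $x$-Fourier modes satisfy $|m|\lesssim \hbar^{-1/2+\epsilon}$, hence $\tau\sim\hbar^{1/2+\epsilon}$, while its transverse width $\sqrt\hbar$ puts it on finitely many Hermite levels, hence $h_1\sim\hbar$; the intertwining operator turns it into a wavepacket of the same shape up to $O(\hbar^\infty)$. On such modes a term of weight $k$ in $H''$ produces, over the time $t = s2\pi/(b\hbar)$, a phase of order $\hbar^{(k-4)/2}$, hence $O(\hbar^{1/2})$ for $k\geq 5$, negligible against the $\hbar^\epsilon$ of the claimed error when $\epsilon<1/2$. (The same bookkeeping shows the $d\tau^4$ term contributes only $O(\hbar^{4\epsilon})$, which is why $d$ drops out of the final formula.) So it suffices to analyze $e^{itH'/\hbar}$ with $H''\equiv 0$.

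Next I would separate variables. Since $\tau$ and $h_1$ commute and the Hermite functions $H_j$ diagonalize $h_1$ with eigenvalue proportional to $j$, writing $g(\theta,\eta)=\sum_{j\geq0}c_j(\theta)H_j(\eta)$ gives $\psi_0 = \hbar^{-1/2+\epsilon/2}\sum_j c_j(x\hbar^{-1/2+\epsilon})H_j(y/\sqrt\hbar)$, and, $H'$ preserving each Hermite slot,
\[
e^{itH'/\hbar}\psi_0(x,y) = \hbar^{-1/2+\epsilon/2}\sum_{j\geq0}\Bigl(e^{itH'_j/\hbar}\bigl[c_j(\cdot\,\hbar^{-1/2+\epsilon})\bigr]\Bigr)(x)\,H_j(y/\sqrt\hbar),
\]
where $H'_j = (aj)\tau + b\tau^2 + c\tau^3 + d\tau^4$ is one-dimensional on $L^2(\mathbb{S}^1)$. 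In $H'_j$ the term $b\tau^2$ has run $s\in\mathbb{N}$ of its quantum periods at $t = s2\pi/(b\hbar)$ and acts trivially, while $(aj)\tau$ generates a translation of $x$; the same elementary congruence manipulations as in the one-dimensional case, using the hypothesis $a=\alpha+2\pi k$ so that only the part $\alpha$ of $a$ is seen on $\mathbb{S}^1$, identify its net effect on the $j$-th slot as $x\mapsto x+j\alpha$. Each $c_j(\cdot\,\hbar^{-1/2+\epsilon})$, being Schwartz and periodized, is a compressed state of the type (\ref{4}) with the same cubic coefficient $c$, so the one-dimensional reconstruction theorem of \cite{TP1} (in its version for an arbitrary symbol) applies to $e^{itH'_j/\hbar}$ and, once the $j\alpha$-translation is put in, yields for $0<x<2\pi$
\[
\Bigl(e^{itH'_j/\hbar}\bigl[c_j(\cdot\,\hbar^{-1/2+\epsilon})\bigr]\Bigr)(x) = \frac{1}{\hbar^{\epsilon/2}(sc/4)^{1/4}}\,c_j\Bigl(\bigl|\tfrac{x+j\alpha}{sc\,\hbar^{\epsilon/2}}\bigr|^{1/2}\Bigr) + O(\hbar^{\epsilon}),
\]
uniformly in $j$. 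Substituting into the sum above and collecting the powers of $\hbar$ gives the announced formula, with $g'$ exactly as in the statement; transporting back through the intertwining operator (microlocally the identity near $\gamma$, up to $O(\hbar^\infty)$) concludes.

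The two delicate points are the following. The first is keeping the normal-form remainder $H''$ and the conjugation errors below $O(\hbar^\epsilon)$ over the long time $t\sim\hbar^{-1}$: this rests precisely on the fact that the relevant modes have $\tau\sim\hbar^{1/2+\epsilon}$ and $h_1\sim\hbar$, which makes every weight-$\geq5$ contribution $O(\hbar^{1/2})$. The second is the uniformity in the Hermite index $j$ required to sum the series: one must simultaneously control the rapid decay of the $c_j$ (inherited from $g\in\mathcal S$), the growing shifts $j\alpha$ inside the profiles $c_j(|(x+j\alpha)/(sc\hbar^{\epsilon/2})|^{1/2})$, and the remainders in the stationary-phase analysis underpinning the one-dimensional theorem.
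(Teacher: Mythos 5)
A preliminary remark: the paper contains no proof of this theorem. It is a survey statement imported from \cite{TP1}; the surrounding text only sets up the normal form $H'=a\tau h_1+b\tau^2+c\tau^3+d\tau^4+H''$ and comments on the result afterwards. Your sketch therefore cannot be checked against an argument in the text, only judged on its internal coherence and against the shape of the asserted formula.

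On that basis the architecture you propose is convincing and is almost certainly the intended one: decompose $g$ on the Hermite basis, note that $H'$, being a function of the commuting operators $\tau$ and $h_1$, preserves each transverse slot, reduce the $j$-th slot to a one-dimensional Hamiltonian in $\tau$ to which the compressed-state reconstruction theorem (\ref{100}) (in its arbitrary-symbol version, which the paper explicitly says is available) applies, and read the cross term $a\tau h_1$ as a $j$-dependent translation along the orbit. This matches exactly the slot-wise shifts $x\mapsto x+j\alpha$ in the definition of $g'$ and the paper's own gloss that the relocalization sites are ``g\'en\'er\'es par les interactions avec les degr\'es de libert\'e transverses''. Your weight bookkeeping for $H''$ ($\tau\sim\hbar^{1/2+\epsilon}$, $h_1\sim\hbar$, weight $\geq 5$ giving $O(\hbar^{1/2})$) and your observation that $d\tau^4$ degenerates to $O(\hbar^{4\epsilon})$ on compressed states, consistent with the disappearance of $d$ from (\ref{100}), are correct and are good consistency checks. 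Two caveats, neither fatal to the structure. First, the phase arithmetic of the cross term is not as clean as you state: the eigenvalue of the semiclassical oscillator $h_1$ on the $j$-th Hermite function is $\hbar(j+\frac{1}{2})$, not $j$, so the translation produced on slot $j$ over the time $t=\frac{s2\pi}{b\hbar}$ is by $s\,2\pi\,a(j+\frac{1}{2})/b$ modulo $2\pi$, and extracting exactly $j\alpha$ from the sole hypothesis $a=\alpha+2k\pi$ requires a normalization convention (presumably fixed in the missing equation the statement refers to) that you neither state nor verify; your formula $H'_j=(aj)\tau+\cdots$ silently drops both the $\hbar$ and the $\frac{1}{2}$, and the congruence does not go through as written. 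The statement itself is too corrupted (dangling reference, exponent $\hbar^{1+e\epsilon}$) to adjudicate this, but you should at least flag that the reduction of $s2\pi a(j+\frac{1}{2})/b$ modulo $2\pi$ is where the hypothesis on $a$ enters. Second, the uniformity in $j$ needed to resum the Hermite series is named as a delicate point but not addressed: the rapid decay of the $c_j$ follows from $g\in\mathcal S$, but one must still check that the $O(\hbar^{\epsilon})$ remainder of the one-dimensional theorem is uniform over the family of translated profiles, which is a genuine (if routine) estimate left undone.
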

 En d'autres termes l'état cohérent se relocalise sur la trajectoire en plusieurs sites, générés par les interactions avec les degrés de liberté transverses.

\section{Le cas instable}

Dans cette section nous allons considérer des évolutions à temps suffisamment long pour provoquer des ``reconstructions" comme dans la section précédente, mais cette
fois dans le cas où la dynamique sous-jacente est instable.

\subsection{Le $8$}
Considérons le cas d'une trajectoire homocline : soit $h(x,\xi)$ une fonction $C^\infty$ telle que la courbe $\Omega:=\{(x,\xi)\in\bbR², h(x,\xi)=0\}$ possède un seul point fixe $(x_0,\xi_0)$
, $dh(x_0,\xi_0)=0$, et que celui-ci soit non-dégénéré de type hyperbolique. On peut sans perte de généralité se ramener au cas où $(x_0,\xi_0)$ est à l'origine. De plus il est
bien connu qu'un changement de coordonnées symplectique linéaire (en fait une simple rotation dans l'espace de phases) permet de se ramener au cas où $h(x,\xi)\sim x\xi$ 
près de 0. L'exemple ``typique"est le hamiltonien :
\[
h(x,\xi)=\xi^2+x²(x²-1).
\]
Il est bien connu aussi qu'au niveau quantique, à la fois la translation $(x_0,\xi_0)\to 0$ et la rotation qui donne $h(x,\xi)\to h(x,\xi)\sim x\xi$ près de 0, 
sont réalisés par des opérateur unitaires
explicites (voir la section (\ref{het})). On traitera donc seulement le cas $h(x,\xi)\sim x\xi$ près de 0.

Dénotons encore une fois par $H$ l'opérateur de symbole de Weyl $h(x,\xi)$. Nous allons considérer l'équation de Schrödinger avec pour condition initiale un état cohérent de
``symbole" $a$ et centré à l'origine :
\[
\left\{\begin{array}{c}
i\partial_t\psi=H\psi\\
\psi^{t=0}=\psi^a_{(0,0}:=\psi^a\end{array}\right.
\]
Soit maintenant $(x(s),\xi(s))$ une paramétrisation hamiltonienne de $\Omega$, c'est-à-dire
\[
\left\{\begin{array}{ccc}
\dot x(s)&=&\partial_\xi h(x(s),\xi(s))\\
\dot\xi(s)&=&-\partial_x h(x(s),\xi(s))\end{array}\right.,\ (x(0),\xi(0)):=(x_0,\xi_0).
\]

On a donc $x(s),\xi(s)\to 0$ quand $s\to\pm\infty$. Soit $t_0$
défini par $e^{t_0}=\lim_{s\to+\infty}x(-s)\xi(s)e^{2s}$.
\begin{lemma}
$t_0$ ne dépend pas de $(x_0,\xi_0)$.
\end{lemma}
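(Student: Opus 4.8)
The plan is to show that the limit defining $e^{t_0}$ is an invariant of the homoclinic \emph{orbit} carried by $\Omega$ and does not depend on which point is chosen as the origin $s=0$ of the Hamiltonian parametrization. Recall that the preliminary linear symplectic change of variables has normalized the hyperbolic point so that $h(x,\xi)\sim x\xi$ near $0$; hence the linearized flow there has Lyapunov exponents $\pm 1$, with the $x$-axis the local unstable direction and the $\xi$-axis the local stable one. The orbit $(x(s),\xi(s))$ tends to $0$ along the stable manifold as $s\to+\infty$ and along the unstable manifold as $s\to-\infty$, and $x(-s)\xi(s)e^{2s}=\bigl(x(-s)e^{s}\bigr)\bigl(\xi(s)e^{s}\bigr)$, so it suffices to understand each factor separately.

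First I would prove that the two one-sided limits $A:=\lim_{s\to-\infty}x(s)e^{-s}$ and $B:=\lim_{s\to+\infty}\xi(s)e^{s}$ exist and are nonzero; then $x(-s)\xi(s)e^{2s}\to AB$ and $e^{t_0}=AB$ is well defined. By the stable manifold theorem the local stable manifold is a smooth curve $x=g(\xi)$ with $g(\xi)=O(\xi^{2})$; along it $\dot\xi=-\partial_x h(g(\xi),\xi)=-\xi\bigl(1+O(\xi)\bigr)$, using $\partial_x h(x,\xi)=\xi+O(|(x,\xi)|^{2})$ and $g(\xi)=O(\xi^{2})$. Since $\xi(s)\to0$ at least exponentially, $\tfrac{d}{ds}\log|\xi(s)|=-1+\varepsilon(s)$ with $\varepsilon\in L^{1}([s_0,\infty))$, so $\log|\xi(s)|+s$ converges and $\xi(s)e^{s}\to B\neq0$ (the sign of $\xi(s)$ being eventually constant on one branch). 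The symmetric argument on the unstable manifold gives $A\neq0$. One may alternatively invoke a Birkhoff normal form $h=f(x\xi)$ with $f(0)=0$, $f'(0)=1$ near $0$, in which the orbit lies exactly on the axes and $x(s)$, $\xi(s)$ are exact exponentials, making $A$ and $B$ transparent; either way, this existence step is the only point that requires genuine work — the issue being that the approach to the hyperbolic point must be clean enough for $\xi(s)e^{s}$ to converge rather than merely stay bounded.

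Granting this, the lemma is immediate. Changing the base point from $(x_0,\xi_0)=(x(0),\xi(0))$ to any other point $(x(c),\xi(c))$ of the same orbit replaces the parametrization by $\tilde x(s)=x(s+c)$, $\tilde\xi(s)=\xi(s+c)$; from the exponential asymptotics, $\tilde A=\lim_{s\to-\infty}x(s+c)e^{-s}=A\,e^{c}$ and $\tilde B=\lim_{s\to+\infty}\xi(s+c)e^{s}=B\,e^{-c}$, whence $\tilde A\tilde B=AB$ and $t_0$ is unchanged. Conceptually, the weight $e^{2s}=e^{s}\cdot e^{s}$ is tuned exactly to the Lyapunov exponents $\pm1$ of the fixed point, so it precisely absorbs the time shift $c$: this is the reason $e^{t_0}$ is an orbit invariant. (If one wants the same value on the two loops of the figure eight, it follows from the $x\leftrightarrow\xi$ symmetry of the normal form $h=f(x\xi)$ near $0$; but the statement as phrased only concerns independence of the base point along a fixed homoclinic orbit, which is the reparametrization argument above.)
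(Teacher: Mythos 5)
Your proof is correct and is precisely the argument the paper has in mind: the paper itself offers no details beyond the single sentence \emph{``La preuve est imm\'ediate''}, and your factorization $x(-s)\xi(s)e^{2s}=\bigl(x(-s)e^{s}\bigr)\bigl(\xi(s)e^{s}\bigr)$ together with the time-shift computation $\tilde A\tilde B=(Ae^{c})(Be^{-c})=AB$ is exactly the content behind that claim. Your extra care in establishing that the one-sided limits $A$ and $B$ exist and are nonzero (integrability of the $O(\xi)$ correction along the stable manifold, or a normal form $h=f(x\xi)$) is a genuine improvement in rigor over the paper, which silently assumes this.
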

La preuve est immédiate.

\begin{theorem}\cite{TP2}. 
Soit $0<\gamma<\frac 1 5$ et soit $t_\hbar:=log\frac 1 \hbar-t_0$, alors
\[
e^{-i\frac{t_\hbar H}\hbar}\psi^a=e^{i(S^++\pi/2)/\hbar}\psi^{b_+}+e^{i(S^-+\pi/2)/\hbar}\psi^{b_-}+O(\hbar^{\gamma/2})
\]
où
\[
b_\pm(\eta):=\int_0^{\pm\infty}a(1/\mu)\frac1\mu\rho(\pm\mu\hbar^\gamma)e^{i\eta\mu}d\mu
\]
et $\rho$ est une  fonction de ``cut-off", c'est-à-dire que $$\rho\in C^\infty([0,1]),\ \rho(y)=1,\ \mbox{pour}\ 0\leq y \leq 1, \rho(y)=0,\ \mbox{pour}\ |y|>2$$.
\end{theorem}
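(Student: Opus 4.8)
The strategy is to reduce the evolution to a model problem near the hyperbolic fixed point, where the dynamics is governed (microlocally) by the dilation generator associated to the symbol $x\xi$, and then to track the coherent state along the homoclinic loop $\Omega$ using the fundamental lemma and the metaplectic transport theorem of Section~3. First I would use the fact, recalled in the excerpt, that the translation $(x_0,\xi_0)\to 0$ and the symplectic rotation bringing $h$ to the form $x\xi+O(\text{higher order})$ near $0$ are implemented by explicit unitaries; this lets me assume $H$ is the Weyl quantization of a symbol equal to $x\xi$ in a fixed neighbourhood of the origin. The quantization of $x\xi$ is (up to the Maslov/ordering constant $\tfrac{\hbar}{2i}$) the generator of dilations $x\partial_x$, whose propagator acts by $e^{-itH/\hbar}a(x/\sqrt\hbar)\mapsto e^{-t/2}a(e^{-t}x/\sqrt\hbar)$ — this is the mechanism producing the rescaling $\mu\mapsto 1/\mu$ and the factor $1/\mu$ in the formula for $b_\pm$.

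\textbf{Main steps.} (1) Propagate the coherent state $\psi^a$ for a time of order $\log\frac1\hbar$ but staying inside the linearizing neighbourhood: here the evolution is \emph{exactly} metaplectic (the symbol being quadratic there), and Theorem~3.5 applies with $\mu(x,\xi)$ the hyperbolic exponent, giving an error $O(\hbar^{1/2-\text{const}\cdot})$ — this is where the restriction $\gamma<\tfrac15$ enters, since we must keep $t\,e^{3\mu t}\hbar^{1/2}$ small while also having travelled far enough along $\Omega$. (2) At the boundary of the neighbourhood the state has split into two pieces localized near the two outgoing branches of $\Omega$; the cut-off $\rho$ appears precisely as the indicator (smoothed) of the region where the linearization is valid, acting at scale $\hbar^\gamma$ in the rescaled variable. (3) Transport each piece along the regular part of $\Omega$ (away from the fixed point) for the remaining time using again Theorem~3.5 — now $\mu$ can be taken as small as one likes on the compact regular arc, so this costs only a bounded metaplectic deformation $M(t)$; the phases $S^\pm$ are the Lagrangian actions $\int(\xi\dot x-h)ds$ accumulated along the two branches, and the $\pi/2$'s are the Maslov contributions at the turning/crossing. (4) Re-enter the neighbourhood of $0$ along the incoming branches: the incoming dynamics again acts by dilation but now \emph{contracting}, which composed with the earlier expansion produces the inversion $\mu\mapsto 1/\mu$ in the argument of $a$; collecting the Jacobian factors gives the $\tfrac1\mu$ weight. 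Summing the contributions from $s\to+\infty$ and $s\to-\infty$ branches yields the two terms $e^{i(S^\pm+\pi/2)/\hbar}\psi^{b_\pm}$, and the constant $t_0$ is exactly the log-discrepancy $\lim x(-s)\xi(s)e^{2s}$ that synchronizes the exit and re-entry times (its independence of the base point is the preceding lemma).

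\textbf{The main obstacle.} The delicate point is the matching in step~(2)–(3): one must show that the Ehrenfest-type error from Theorem~3.5, accumulated over a time $\sim\log\frac1\hbar$ with the \emph{true} hyperbolic exponent, does not swamp the leading term, and simultaneously that the smooth cut-off at scale $\hbar^\gamma$ correctly excises the non-quadratic remainder of $h$ without introducing spurious boundary contributions. Concretely, the higher-order part of the symbol, $h(x,\xi)-x\xi$, is $O((x,\xi)^3)$, so on the support of $\rho(\cdot\,\hbar^\gamma)$ it contributes a perturbation $R^t$ with $\|R^t\|=O(\hbar^{3\gamma/?})$ to the Schrödinger equation; feeding this into the fundamental Lemma~3.1 gives an error $O(t\,\hbar^{-1}\|R^t\|)$, and balancing this against the transport error forces $\gamma<\tfrac15$. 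Making this bookkeeping uniform — in particular checking that the remainder stays small in $L^2$ and not merely pointwise, and that the two split pieces do not interfere — is the technical heart of the argument; everything else is an application of the metaplectic transport theorem and stationary phase. I would organize the estimate so that the threshold $\tfrac15$ emerges transparently from a single inequality of the form $t_\hbar\,e^{3\mu t_\hbar}\hbar^{1/2}+t_\hbar\hbar^{-1}\hbar^{(1+?)\gamma}=O(\hbar^{\gamma/2})$.
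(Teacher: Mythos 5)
The paper itself contains no proof of this theorem: it is quoted from the preprint \cite{TP2} and the expos\'e only states it, so there is no in-text argument to compare yours with. Judged on its own, your outline has the right global picture --- linearization near the hyperbolic point, exact dilation dynamics for the quadratic model $x\xi$, splitting into two outgoing branches, transport around the two lobes of the figure eight, re-concentration after a total time $\log\frac1\hbar-t_0$ with $t_0$ the synchronization constant of the preceding lemma, and the inversion $\mu\mapsto 1/\mu$ with the Jacobian weight $\frac1\mu$ coming from the composition of expansion and contraction. This matches the mechanism the surrounding text describes. But two of your steps, as written, would fail.

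First, you invoke the coherent-state propagation theorem of Section 3 (error $C\,t\,e^{3\mu t}\hbar^{1/2}$) with ``$\mu$ the hyperbolic exponent'' over a time of order $\log\frac1\hbar$. The paper itself points out that this bound is only useful for $t\leq(\frac16-\epsilon)\log\frac1\hbar$; at $t=t_\hbar$ with $\mu>0$ the right-hand side is of order $\hbar^{1/2}e^{3t_\hbar}\sim\hbar^{-5/2}$ and the estimate says nothing. The entire content of the theorem being proved is that it lives beyond this Ehrenfest regime, so the proof must instead rest on the exact solvability of the Weyl quantization of $x\xi$ (the dilation generator, whose propagator $f(x)\mapsto e^{-t/2}f(e^{-t}x)$ is exact, with no error at all) together with a separate perturbative control, via the fundamental lemma, of the non-quadratic remainder $h-x\xi=O(|(x,\xi)|^3)$ on the support of the cutoff $\rho(\cdot\,\hbar^\gamma)$. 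That second estimate is precisely where the threshold $\gamma<\frac15$ must emerge, and it is exactly the place where your write-up leaves question marks (``$O(\hbar^{3\gamma/?})$'', ``$\hbar^{(1+?)\gamma}$''): the quantitative heart of the argument is not actually carried out.

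Second, your step (3) transports the two pieces along the regular arcs of $\Omega$ ``using again'' the coherent-state theorem. But after the expansion phase of duration $\sim\frac12\log\frac1\hbar$ the state is no longer a coherent state: it is spread over the outgoing branch at scale $O(1)$ --- the paper says explicitly that for intermediate times the state is a Lagrangian distribution associated to $\Omega$. The coherent-state propagation theorem does not apply to such an object; the transport along the loop must be done by WKB/Fourier-integral propagation of Lagrangian states, which is also where the actions $S^\pm$ and the Maslov contributions $\pi/2$ actually arise. Without this replacement, and without the explicit error balance producing $\gamma<\frac15$, what you have is a correct roadmap rather than a proof.
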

Ce théorème indique donc que le paquet d'onde se reforme à l'origine au bout d'un temps logarithmique dans la constante de Planck. De plus il nous indique comment un tel
paquet d'onde re relocalise, en nous donnant la forme de son symbole.

\begin{lemma}\cite{TP2}. 
Définissons sur $\mathcal S(\mathbb R)$ l'opérateur $U$ par, pour $\alpha\in\mathcal S(\mathbb R)$,
\[\begin{array}{crl}
U\alpha(\eta)&:=& e^{i(S^++\pi/2)/\hbar}\int_0^{+\infty}\alpha(1/\mu)\frac1\mu\rho(\mu\hbar^\gamma)e^{i\eta\mu}d\mu\\
&+& 
e^{i(S^-+\pi/2)/\hbar}\int_0^{-\infty}\alpha(1/\mu)\frac1\mu\rho(-\mu\hbar^\gamma)e^{i\eta\mu}d\mu.\end{array}
\]
Alors $U$ est unitaire modulo $\hbar^{\gamma/2}$, 
\[
||U\alpha||_{L^2}=||\alpha||_{L^2}+o(\hbar^{\gamma/2}).
\]
\end{lemma}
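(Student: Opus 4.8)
The plan is to show that the operator $U$ is, up to an error of size $O(\hbar^{\gamma/2})$ in operator norm on $L^2(\mathbb R)$, a genuine isometry, and then to observe that the preceding theorem identifies $U\alpha$ as the image under the propagator $e^{-it_\hbar H/\hbar}$ of the coherent state $\psi^\alpha$ (centered at the origin, with symbol $\alpha$) — up to the same error. Since the propagator is unitary and the map $a\mapsto\psi^a$ is an isometry between $L^2(\mathbb R_\eta)$ and the relevant subspace of $L^2(\mathbb R_x)$ (this is immediate from the definition \eqref{ec} by the change of variable $x=q+\sqrt\hbar\,\eta$, which is norm-preserving after the $\hbar^{-n/4}$ normalization), the conclusion $\|U\alpha\|_{L^2}=\|\alpha\|_{L^2}+o(\hbar^{\gamma/2})$ will follow by the triangle inequality: $\|U\alpha\|=\|e^{-it_\hbar H/\hbar}\psi^\alpha+O(\hbar^{\gamma/2})\|=\|\psi^\alpha\|+O(\hbar^{\gamma/2})=\|\alpha\|+O(\hbar^{\gamma/2})$.

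So the real content is a direct estimate on $U$ as written. First I would note that the two terms defining $U\alpha$ are supported, in the $\mu$-variable, on $0<\mu\lesssim \hbar^{-\gamma}$ and $-\hbar^{-\gamma}\lesssim\mu<0$ respectively, because of the cutoffs $\rho(\pm\mu\hbar^\gamma)$. Changing variables $\mu\mapsto 1/\mu$ in each integral, $U\alpha(\eta)$ becomes (up to the unimodular phases $e^{i(S^\pm+\pi/2)/\hbar}$) a pair of half-line Fourier-type transforms $\int \alpha(v)\,e^{i\eta/v}\,\rho(\pm\hbar^\gamma/v)\,\frac{dv}{v}$; the key point is that on the half-line the map $\alpha(v)\mapsto \frac1v\alpha(1/v)$ combined with the Fourier kernel $e^{i\eta\mu}$ is, without the cutoff, an isometry of $L^2(0,+\infty)$ — this is the standard fact that $v\mapsto 1/v$ together with the $L^2$-measure-preserving weight $dv/v$ (respectively $d\mu/\mu$) is an involutive isometry, composed with the (half-line) Fourier transform. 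The two half-lines $\mu>0$ and $\mu<0$ give orthogonal contributions since their images under Fourier transform do not interfere at the level of $L^2$ norms after one accounts for the phases; more precisely one computes $\|U\alpha\|^2$ as a sum of two $L^2(\mathbb R_\eta)$ norms plus a cross term, and the cross term carries the oscillatory factor $e^{i(S^+-S^-)/\hbar}$ which, together with the fact that the two pieces live on complementary frequency half-lines, makes it negligible.

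The main obstacle is controlling the effect of the cutoff $\rho(\pm\mu\hbar^\gamma)$: removing it would make $U$ exactly unitary (modulo the cross-term bookkeeping above), so I must show $\|(1-\rho(\cdot\,\hbar^\gamma))$-part$\|=O(\hbar^{\gamma/2})$. This is where the hypothesis $a\in\mathcal S(\mathbb R)$ (hence $\alpha$ Schwartz) enters: after the change of variables the excised region is $|v|\lesssim\hbar^\gamma$, i.e.\ $v$ near $0$, where $\alpha(v)$ is bounded and the measure of the region is $O(\hbar^\gamma)$; but one must be careful because the kernel $e^{i\eta/v}$ oscillates wildly there. I would handle this by an integration-by-parts / non-stationary-phase argument in $v$ on the region $\hbar^\gamma\lesssim|v|\lesssim 1$ to show the difference between the cutoff and non-cutoff operators is $O(\hbar^{\gamma/2})$ in $L^2$; alternatively, and more robustly, estimate the $L^2(\mathbb R_\eta)$ norm of the excised piece directly by Plancherel on the half-line, bounding it by the $L^2(dv/v)$-norm of $\alpha$ restricted to $|v|\lesssim\hbar^\gamma$, which is $O(\hbar^{\gamma/2})$ since $\int_0^{\hbar^\gamma}|\alpha(v)|^2\frac{dv}{v}$ — wait, this diverges, so in fact one should keep the cutoff on the $\mu$-side where it localizes $|\mu|\lesssim\hbar^{-\gamma}$ and estimate the complementary high-$\mu$ tail using decay of the Fourier transform of the (smooth, compactly-behaved) function $\mu\mapsto\frac1\mu\alpha(1/\mu)$ away from $\mu=0$, whose $L^2$ mass beyond $|\mu|\gtrsim\hbar^{-\gamma}$ is $O(\hbar^{N\gamma})$ for every $N$ by the Schwartz property. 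That tail estimate, done carefully on each half-line and combined with the isometry of the half-line Fourier transform and the negligibility of the cross term, yields $\|U\alpha\|_{L^2}=\|\alpha\|_{L^2}+o(\hbar^{\gamma/2})$, which is the claim.
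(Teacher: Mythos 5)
The paper itself gives no proof of this lemma --- it is quoted from the preprint \cite{TP2} --- so there is nothing in the text to compare you against; I judge the proposal on its own. Your direct argument is the right one and, once cleaned up, works: $U\alpha$ is the sum of the Fourier transforms (in $\mu\to\eta$) of two functions supported respectively on $\mu>0$ and $\mu<0$, so the cross term in $\|U\alpha\|_{L^2(d\eta)}^2$ vanishes \emph{exactly} by Plancherel and disjointness of supports (no oscillation argument is needed); on each half-line the substitution $v=1/\mu$ turns $|\frac1\mu\alpha(1/\mu)|^2\,d\mu$ into $|\alpha(v)|^2\,dv$, so without the cutoff each piece is an exact isometry; and the cutoff $\rho(\pm\mu\hbar^\gamma)$ only alters the integrand on $|v|\leq 2\hbar^\gamma$, where $|\alpha|^2$ is bounded, whence $\|U\alpha\|^2=\|\alpha\|^2+O(\hbar^\gamma)$ and therefore $\|U\alpha\|=\|\alpha\|+O(\hbar^\gamma)=\|\alpha\|+o(\hbar^{\gamma/2})$.

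Two concrete defects remain in your write-up. First, the justification you finally settle on is false: the $L^2(d\mu)$ mass of $\mu\mapsto\frac1\mu\alpha(1/\mu)$ beyond $|\mu|\geq\hbar^{-\gamma}$ is \emph{not} $O(\hbar^{N\gamma})$ for every $N$ ``by the Schwartz property''. That function behaves like $\alpha(0)/\mu$ at infinity (rapid decay of $\alpha$ controls $\mu\to0$, not $\mu\to\infty$), so the tail is of order exactly $\hbar^\gamma$ and no better; this happens to be all you need, but the reason you give is wrong. Second, to reach the stated $o(\hbar^{\gamma/2})$ you must compare \emph{squared} norms as above; estimating the $L^2$ norm of the excised piece (which is genuinely of size $\hbar^{\gamma/2}$) and invoking the triangle inequality, as you propose, loses a square root and only yields $O(\hbar^{\gamma/2})$. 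The same loss dooms your opening ``indirect'' plan via the preceding reconstruction theorem --- legitimate and non-circular in the paper's ordering, but incapable of giving better than the $O(\hbar^{\gamma/2})$ error already present there. Finally, the detour through the measure $dv/v$ (which, as you noticed, diverges) and the proposed non-stationary-phase argument should simply be deleted: the correct weight is Lebesgue $dv$, exactly compensated by the prefactor $1/\mu$, and once everything is computed on the $\mu$ side via Plancherel no oscillatory estimate is required.
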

Le résultat suivant nous permet d'itérer :

\begin{theorem}\cite{TP2}
Fixons $C>0$. Alors pour tout $n\leq C\frac{log\frac 1 \hbar}{loglog\frac 1 \hbar}$,
\[
e^{-i\frac{nt_\hbar H}\hbar}\psi_a=\psi_{U^na}+O(\hbar^{\gamma/2}(log\frac 1 \hbar)^{n/2}).
\]
Donc la reconstruction semiclassique est valide pour des temps de l'ordre de 
\[
t\sim C\frac{log^2\frac 1 \hbar}{loglog\frac 1 \hbar}.
\]
\end{theorem}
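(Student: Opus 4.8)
The plan is to iterate the single-reconstruction Theorem (the one with $U$, for time $t_\hbar = \log\frac1\hbar - t_0$) $n$ times, using the previous Lemma (unitarity of $U$ modulo $\hbar^{\gamma/2}$) to control error propagation, together with the fundamental Lemma of Section~3 to convert a one-step error estimate into an $n$-step estimate. The key point is that propagating by $nt_\hbar H/\hbar$ is, up to a controlled error, the same as applying $U^n$ to the symbol; the delicate bookkeeping is that each application of $U$ both (a) introduces a new $O(\hbar^{\gamma/2})$ error and (b) magnifies pre-existing errors by the operator norm of the true propagator acting on the error term, which is $1$, but the intermediate ``approximate'' $U$ is only an $L^2$-isometry up to $o(\hbar^{\gamma/2})$ and — more importantly — the symbols $U^k a$ themselves may grow in $L^2$ (or in whatever Schwartz-type norm controls the hypotheses of the single-step theorem) like a power of $\log\frac1\hbar$.

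**Key steps.** First I would set up an induction on $n$. Write $\Psi_n := e^{-i n t_\hbar H/\hbar}\psi_a$ and $\Phi_n := \psi_{U^n a}$, with $\Psi_0 = \Phi_0 = \psi_a$. By the single-reconstruction Theorem applied to the state $\psi_{U^n a}$ as initial condition, $e^{-i t_\hbar H/\hbar}\psi_{U^n a} = \psi_{U^{n+1}a} + O(\hbar^{\gamma/2}\|U^n a\|_{\ast})$, where $\|\cdot\|_\ast$ is the relevant norm on symbols (one should check the constant $C$ in that theorem depends only on $h$ near the origin, not on the symbol, so only a norm of the symbol appears as a multiplicative factor; this is the place where the precise statement of Theorem~\cite{TP2} matters). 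Second, by unitarity of $e^{-it_\hbar H/\hbar}$,
\[
\|\Psi_{n+1} - \Phi_{n+1}\| \le \|e^{-it_\hbar H/\hbar}\Psi_n - e^{-it_\hbar H/\hbar}\Phi_n\| + \|e^{-it_\hbar H/\hbar}\Phi_n - \Phi_{n+1}\|
= \|\Psi_n - \Phi_n\| + O\bigl(\hbar^{\gamma/2}\|U^n a\|_\ast\bigr).
\]
Third, I would control $\|U^n a\|_\ast$: from the $U$-Lemma, $\|U^n a\|_{L^2} = \|a\|_{L^2} + o(\hbar^{\gamma/2})$, but the Schwartz-type norms controlling the hypotheses can deteriorate, and the natural bound is $\|U^n a\|_\ast \le (C\log\frac1\hbar)^{n/2}\|a\|_\ast$ for a suitable $C$ — this is where the $(\log\frac1\hbar)^{n/2}$ in the error term comes from. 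Summing the telescoped errors over $n$ steps gives $\|\Psi_n - \Phi_n\| = O\bigl(\hbar^{\gamma/2}(\log\frac1\hbar)^{n/2}\bigr)$ (the geometric-type sum is dominated by its last, largest term, up to a constant). Fourth, for the final ``validity'' statement: the error stays $o(1)$ as long as $\hbar^{\gamma/2}(\log\frac1\hbar)^{n/2} \to 0$, i.e. $n \log\log\frac1\hbar \lesssim \gamma \log\frac1\hbar$, i.e. $n \le C \frac{\log\frac1\hbar}{\log\log\frac1\hbar}$; since each step advances time by $\sim \log\frac1\hbar$, the total reconstructed time is $\sim C\frac{\log^2\frac1\hbar}{\log\log\frac1\hbar}$.

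**Main obstacle.** The hard part is the third step: tracking in which norm the symbols $U^n a$ live and showing the growth is at most exponential in $n$ with a $\sqrt{\log\frac1\hbar}$-type base. One must verify that the hypotheses of the single-reconstruction theorem (Schwartz regularity of the symbol, plus whatever decay is needed for the oscillatory-integral estimates defining $b_\pm$) are stable under $U$ with only a polynomially-in-$\log\frac1\hbar$ loss per iteration — the map $\alpha \mapsto \int_0^{\pm\infty}\alpha(1/\mu)\mu^{-1}\rho(\pm\mu\hbar^\gamma)e^{i\eta\mu}d\mu$ involves an inversion $\mu \mapsto 1/\mu$ and a cutoff at scale $\hbar^{-\gamma}$, so derivatives of $U\alpha$ pick up factors of $\hbar^{-\gamma}$, and one must see that these are reabsorbed into the stated error rather than destroying it. A secondary, more routine concern is making the telescoping rigorous when the ``intermediate'' objects $\psi_{U^k a}$ are only approximately normalized; here the $U$-Lemma's bound $\|U^k a\|_{L^2} = \|a\| + o(\hbar^{\gamma/2})$ suffices because the accumulated defect over $n \le C\log\frac1\hbar/\log\log\frac1\hbar$ steps is still $o(1)$. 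Everything else is the fundamental Lemma of Section~3 applied mechanically.
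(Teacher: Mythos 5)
The paper itself offers no proof of this theorem --- it is quoted from the preprint \cite{TP2} --- so there is nothing to compare line by line; but the architecture you propose (iterate the one-step reconstruction theorem, telescope using the unitarity of the exact propagator $e^{-it_\hbar H/\hbar}$, invoke the lemma on the approximate unitarity of $U$ to keep the intermediate states normalized, and read off the time scale from the condition $\hbar^{\gamma/2}(\log\frac1\hbar)^{n/2}\to 0$) is exactly the one the paper's sequence of statements points to, and your steps two and four are correct and essentially mechanical.

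The genuine gap is your third step, and you have correctly located it yourself: the entire quantitative content of the theorem --- the factor $(\log\frac1\hbar)^{n/2}$ and hence the admissible range of $n$ --- rests on the bound $\|U^k a\|_\ast\leq (C\log\frac1\hbar)^{k/2}\|a\|_\ast$ together with a version of the one-step theorem whose error is $O(\hbar^{\gamma/2}\|a\|_\ast)$ \emph{uniformly} over the class of symbols produced by iterating $U$. Neither is proved, and neither is routine: $U^k a$ is (on the Fourier side) cut off at scale $\hbar^{-\gamma}$, so it is not Schwartz uniformly in $\hbar$, each derivative costs a factor $\hbar^{-\gamma}$, and the one-step theorem as stated in the paper (for a fixed, $\hbar$-independent $a\in\mathcal S$) does not apply to it as written. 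The $L^2$-lemma you invoke controls only $\|U^k a\|_{L^2}$, which is not the norm that enters the one-step error; identifying the correct seminorms, showing they lose only a factor $(\log\frac1\hbar)^{1/2}$ per application of the map $\alpha\mapsto\int_0^{\pm\infty}\alpha(1/\mu)\mu^{-1}\rho(\pm\mu\hbar^\gamma)e^{i\eta\mu}d\mu$ (rather than a power of $\hbar^{-\gamma}$), and reproving the one-step estimate with explicit dependence on those seminorms is the actual proof. As a plan of attack your proposal is the right one; as a proof it stops exactly where the work begins.
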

Pour des temps intermédiaires, l´état du système est un distribution lagrangienne associée à $\Omega$.
La  propriété de localisation-delocalisation peut être mieux vue sur les éléments de matrice d'observables : 

\begin{theorem}\cite{TP2}. 
Soit $(p(s),q(s))$ une paramétrisation de $h(p,q)=0$ comme précédemment telle que $q(s)\sim e^s (resp. p(s)\sim e ^{-s})$ quand $s\to -\infty\ (resp. +\infty)$, et soit 
$P$ une observable of
symbole (principal) de Weyl $P(p,q)$. Alors, uniformément pour  $t\leq (1+\epsilon)log \frac 1 \hbar,\ 0\leq\epsilon<1/2$,
\[\begin{array}{rcl}
<\psi^t,P\psi^t>&=&<\psi_a,e^{i\frac{t H}\hbar}Pe^{-i\frac{t H}\hbar}\psi_a>\\
&=&\hbar^{-1/2}\int_{-\infty}^{+\infty}|a(e^{s-t}/\hbar)|^2P(p(s),q(s))e^{s-t}ds +O(\hbar^{11/2})\end{array}.
\]
En particulier, pour $t$ borné : 
\[
<\psi^t,P\psi^t>=P(0,0)+O(\hbar^{1/2}),
\]
pour $t\sim log\frac 1 \hbar$ :
\[
<\psi^t,P\psi^t>= P(0,0) +O(\hbar^{\gamma/2}),
\]
et pour $t= \frac 1 2 log \frac 1 \hbar-t'$ :
\[
<\psi^t,P\psi^t>=\int_{h(p,q)=0}P(p,q)d\mu^{t'}+O(\hbar^{1/2})
\]
où la mesure  $d\mu^{t'}$ est donnée par la densité $|a(e^{s-t'})|^2e^{s-t'}$, c'est-à-dire $d\mu^{t'}=|a(e^{s-t'}|^2e^{s-t'}ds$.
De plus les même résultats restent valides après $n$ itérations ($n\leq C\frac{log\frac 1 \hbar}{loglog\frac 1 \hbar}$) : on a, pour $t\sim n log\frac 1 \hbar$:
\[
<\psi^t,P\psi^t>= P(0,0) +O(\hbar^{\gamma/2}(log\frac 1 \hbar)^{n/2}),
\]
et pour $t= \frac n 2 log \frac 1 \hbar-t'$:
\[
<\psi^t,P\psi^t>=\int_{h(p,q)=0}P(p,q)d\mu^{t'}+O(\hbar^{\gamma/2}(log\frac 1 \hbar)^{n/2}),
\]
où la mesure  $d\mu^{t'}$ est donnée par la densité $|U^na(e^{s-t'})|^2e^{s-t'}$
\end{theorem}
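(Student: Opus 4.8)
The strategy is to reduce everything to the one-dimensional computation already available near the hyperbolic point. Microlocally around $\Omega$ the hamiltonian $H$ is, after the unitary normalisation invoked earlier, equal to the quantisation of $x\xi$, whose quantum flow $e^{-itH/\hbar}$ acts on phase space by the dilation $(x,\xi)\mapsto(e^{t}x,e^{-t}\xi)$. Thus the Heisenberg-evolved observable $e^{itH/\hbar}Pe^{-itH/\hbar}$ has, modulo $O(\hbar)$ corrections controlled by Egorov with remainder (this is where the $\mu(x,\xi)$-bound of Theorem 3.3 and its iterates enter, giving the $e^{3\mu t}\hbar^{1/2}$ losses that must be kept below $O(\hbar^{\gamma/2})$ for $t\le(1+\epsilon)\log\frac1\hbar$), principal symbol $P\circ\Phi^t$. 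The plan is then to write
\[
\langle\psi^t,P\psi^t\rangle=\langle\psi^a,(P\circ\Phi^t)^{\mathrm{w}}\psi^a\rangle+O(\hbar^{\gamma/2}),
\]
expand the right-hand side as an oscillatory integral against the Wigner function of $\psi^a$, and exploit that the latter is, by the Wigner-transform property recalled in Section 2, concentrated at scale $\sqrt\hbar$ around the origin.

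The key steps, in order, are: (1) localise — insert a microlocal cutoff supported near $\Omega$ and show, using propagation of singularities and the fact that the time stays below $\log\frac1\hbar$, that the contribution away from $\Omega$ is $O(\hbar^\infty)$; (2) apply Egorov with controlled remainder on the localised piece to replace $P$ by $P\circ\Phi^t$; (3) parametrise $\Phi^t(0,0)$-neighbourhood by the homoclinic curve, writing a point near $\Omega$ as $(p(s),q(s))$ plus a transverse coordinate, and use $q(s)\sim e^s$, $p(s)\sim e^{-s}$ as $s\to\mp\infty$; (4) compute the Gaussian-type integral: since $\psi^a(x)=\hbar^{-1/4}a(x/\sqrt\hbar)$, the overlap $\langle\psi^a,(P\circ\Phi^t)^{\mathrm{w}}\psi^a\rangle$ becomes, after the change of variables $x=\sqrt\hbar\,y$ and stationary phase in $\xi$, the one-dimensional integral $\hbar^{-1/2}\int|a(e^{s-t}/\hbar)|^2P(p(s),q(s))e^{s-t}\,ds$, the weight $e^{s-t}$ being the Jacobian of the flow along $\Omega$; (5) read off the three regimes by inspecting where $a(e^{s-t}/\hbar)$ is non-negligible — for $t$ bounded the mass sits at $s=-\infty$ forcing the value $P(0,0)$, for $t\sim\log\frac1\hbar$ it still sits near the fixed point, and for $t=\frac12\log\frac1\hbar-t'$ the variable $e^{s-t}/\hbar=e^{s-t'}$ is $O(1)$ so the integral spreads along all of $\Omega$ with the stated density; (6) iterate using the unitary-mod-$\hbar^{\gamma/2}$ operator $U$ of the preceding lemma, so that after $n$ steps $\psi^a$ is replaced by $U^na$ and each step costs a factor $(\log\frac1\hbar)^{1/2}$ in the remainder, whence the constraint $n\le C\log\frac1\hbar/\log\log\frac1\hbar$.

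The main obstacle is step (2)–(3): controlling the Egorov remainder uniformly up to Ehrenfest-type times $t\sim\log\frac1\hbar$ while the flow is genuinely hyperbolic, so that $d\Phi^t$ grows like $e^{t}$ and naive estimates give losses $e^{Ct}\hbar^{1/2}$ that are not small. One must use that the relevant observable is tested against a wavepacket whose Wigner function is supported within $\sqrt\hbar$ of the origin, so the effective region explored by $\Phi^t$ has size $\sqrt\hbar\,e^{t}$, which stays bounded precisely for $t\le\frac12\log\frac1\hbar$, and one squeezes a little further (to $(1+\epsilon)\log\frac1\hbar$, $\epsilon<\frac12$) by exploiting the contraction in the stable direction — this is exactly the $\hbar^{11/2}$ gain visible in the statement. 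Once this uniform-in-time control is in place, the remaining integral computation and the iteration are routine, the iteration being just repeated application of the already-established single-period reconstruction theorem together with the stability estimate $\|U^na\|=\|a\|+o(\hbar^{\gamma/2})$ compounded $n$ times.
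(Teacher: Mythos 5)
Your overall architecture (Egorov on the observable, then pairing $P\circ\Phi^t$ against the Wigner function of $\psi^a$) is not the route the surrounding text points to, and it contains a genuine gap at its central step. The paper's general long-time result (Theorem 3.2) only controls the semiclassical approximation up to $T_1(\hbar)\sim\frac16\log\frac1\hbar$ precisely because the error grows like $te^{3\mu t}\hbar^{1/2}$; for the hyperbolic normal form $\mu=1$, so no Egorov-with-remainder statement available here survives to $t\le(1+\epsilon)\log\frac1\hbar$. Your proposed rescue --- that the wavepacket only explores a region of size $\sqrt\hbar\,e^{t}$ --- does not close this gap: what controls the Egorov remainder is not the size of the region visited but the derivatives of the evolved symbol, and $P\circ\Phi^t$ acquires derivatives of order $e^{t}$, i.e.\ of order $\hbar^{-1}$ at $t\sim\log\frac1\hbar$, which is outside every symbol class for which the Weyl calculus and the Egorov expansion remain valid. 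Moreover the $O(\hbar^{11/2})$ you interpret as a ``gain from the stable direction'' is visibly a typographical artefact of the statement and cannot carry the weight you put on it. The intended argument, as signalled by the sentence preceding the theorem (``pour des temps interm\'ediaires, l'\'etat du syst\`eme est une distribution lagrangienne associ\'ee \`a $\Omega$'') and by the companion reconstruction theorems, propagates the \emph{state} rather than the observable: one uses the explicit diagonalisation of the quantised $x\xi$ (a Mellin-type transform), matches the resulting exact local solution to a WKB/Lagrangian distribution carried by the homoclinic curve with transported density $|a(e^{s-t}/\hbar)|^2e^{s-t}\,ds$, and only then evaluates $\langle\psi^t,P\psi^t\rangle$ by stationary phase. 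This bypasses the long-time Egorov problem entirely, which is the whole point of the construction.

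Two smaller but concrete errors: in your step (5), with $t=\frac12\log\frac1\hbar-t'$ one has $e^{s-t}/\hbar=e^{s+t'}\hbar^{-1/2}$, not $e^{s-t'}$, so your identification of the delocalised regime does not follow from the arithmetic you give; and in your step (6) the iteration is not ``routine'' composition of the single-step result with $\|U^na\|=\|a\|+o(\hbar^{\gamma/2})$ --- the symbols $U^na$ become increasingly singular (the paper stresses this in the conclusion), and one must track how the Lagrangian-distribution description and the stationary-phase estimates degrade in $n$, which is exactly where the factor $(\log\frac1\hbar)^{n/2}$ and the restriction $n\le C\log\frac1\hbar/\log\log\frac1\hbar$ come from.
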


\subsection{Interprétation}
Nous voyons donc apparaître, à la limite semiclassique, des oscillations en temps entre états localisés pour des temps multiples de $\log{\frac ` \hbar}$ et des états
nonlocalisés pour des temps intermédiaires. De plus l'application qui fait passer du symbole de l'état cohérent original à celui reconstruit
après une itération joue le rôle de la quantification (métaplectique) du flot linéarisé.

\subsection{Le cas hétérocline quelconque}\label{het}

Dans cette  section nous indiquons comment traiter le cas  général d'une jonction hétérocline entre deux points fixes hyperboliques. Considérons un hamiltonien $h(p,q)$ tel que la surface
d'énergie  $\{h=0\}$ contienne deux points fixes hyperboliques reliés par une courbe régulière. On peut considérer sans perte de généralité 
que $h$ est tel que:
\[
h(p,q)=pq + O(|(p,q)|^3)
\]
et
\[
dh(p_0,q_0)=0
\]
avec $dh\neq 0$ sur la courbe $\Lambda$ entre l'origine et $(p_0,q_0)$. Supposons de plus que le  hessien de $h$ à $(p_0,q_0)$, $Hess(h)_{p_0,q_0}$, est de type hyperbolique, 
c'est-à-dire qu'il existe une matrice  $2\times 2$ symplectique
$M$ telle que
\[
M^THess(h)_{p_0,q_0}M=\left(\begin{array}{cc} \mu_0&0\\0&\frac 1 {\mu_0}\end{array}\right).
\]
Il est bien connu que $M$ est une rotation dans l'espace de phases. Soit $\theta_0$ l'angle de cette rotation. A $M$ on peut associer l'opérateur (métaplectique)  $\tilde{M}$ 
unitaire sur $L^2(\bbR, d\eta)$ défini par :
\[
\tilde M=e^{i\frac{\theta_0}2(-\frac{d^2}{d\eta^2}+\eta^2)}
\]
(notons que le noyau intégral de $\tilde M$ est calculable explicitement).
Considérons une condition initiale qui est un état cohérent à $(0,0),\ \psi^{a}$, nous allons calculer l'évolution de la partie $\psi^{a+}$ qui correspond à la partie de la condition
initiale ``vers $(p_0,q_0)$", c'est-à-dire moralement :
\[
\psi^{a+}(x)=\psi^{a}(x),\ x>0:\ \ \ \psi^{a+}(x)=0,\ x<0.
\]
dans le cas où la partie $\Lambda$ de la surface d'énergie reliant l'origine à $(x_0,\xi_0)$ est tangente à l'origine à $\bbR^+$. $\psi^{a+}$ est trop singulière
pour que l'on puisse calculer semiclassiquement $ e^{-i\frac{t_\hbar H}\hbar}\psi_{a+}$, il faut régulariser un peu : écrivons, pour $\epsilon>0$,
\[
a=a_\epsilon^++a_\epsilon^-
\]
avec  $||a_\epsilon^\pm||_{L^2(\bbR^\mp)}=O(\epsilon)$ et $a_\epsilon^\pm$ dans la classe de Schwartz.

\begin{theorem}\cite{TP2}. 
Soit $H$ la quantification de Weyl de $h$. Alors il existe $t_0$ tel que, si
$t_\hbar:=(\frac 1 2 +\frac 1 {\mu_0})\log{\frac 1 \hbar}-t_0$, on a

\[
e^{-i\frac{t_\hbar H}\hbar}\psi^{a_\epsilon^+}=e^{i(S^++\sigma\pi/2)/\hbar}\psi^{b_+}+O(\hbar^{\gamma/2})+O(\epsilon),
\]
où $\sigma$ est l'indice de  Maslov de la courbe $\Lambda$, $S^+:=\int_\Lambda pdq$ et
\[
b_+=\tilde{M}a(\frac 1 .)\frac 1 .\rho(.\hbar^\gamma).
\]
\end{theorem}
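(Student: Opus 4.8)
The plan is to adapt the three‑regime argument of the homoclinic case (``le $8$''): a \emph{departure} phase near the origin, where the packet of width $\sqrt\hbar$ spreads along the unstable manifold into a Lagrangian state of size $O(1)$; a \emph{transport} phase along the regular part of $\Lambda$, where $dh\neq0$; and an \emph{arrival} phase near $(p_0,q_0)$, where the incoming Lagrangian leaf re‑contracts into a coherent state. The new feature is that the two fixed points have different hyperbolicity, which is why the reconstruction time is $(\tfrac12+\tfrac1{\mu_0})\log\tfrac1\hbar-t_0$: the $\tfrac12\log\tfrac1\hbar$ is the time to blow the packet up to size $O(1)$ near the origin, where $h\sim pq$ and the rate is $1$; the $\tfrac1{\mu_0}\log\tfrac1\hbar$ is the time to squeeze it back down near $(p_0,q_0)$; and $t_0$ is a constant absorbing the finite traversal of $\Lambda$ and the matching of the two intrinsic clocks. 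It is also why the plain Fourier transform of the symmetric case is here replaced by the metaplectic rotation $\tilde M$.

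First I would normalize $H$ near each fixed point. Near the origin, using the explicit unitaries realizing the translation $(x_0,\xi_0)\mapsto 0$ and the phase‑space rotation bringing $h$ to $pq$, one replaces $H$ microlocally by the Weyl quantization $H_0=\tfrac\hbar i(x\partial_x+\tfrac12)$ of $pq$, whose propagator is the explicit dilation $e^{-itH_0/\hbar}f(x)=e^{-t/2}f(e^{-t}x)$; in particular $e^{-itH_0/\hbar}\psi^a$ equals $a$ itself (up to the obvious change of variable) precisely at $t=\tfrac12\log\tfrac1\hbar$. Near $(p_0,q_0)$ one conjugates by $\tilde M=e^{i\frac{\theta_0}2(-\partial_\eta^2+\eta^2)}$, the metaplectic rotation of angle $\theta_0$ diagonalizing $\mathrm{Hess}(h)_{p_0,q_0}$, and obtains again a model hyperbolic quadratic, now with rate governed by $\mu_0$. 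Each normalization leaves a cubic‑and‑higher remainder, which enters the evolution as a source term $R^s$; by the fundamental Lemma it suffices that $t\,\sup_{s\le t}\|R^s\|/\hbar=O(\hbar^{\gamma/2})$, and this is arranged by the cut‑off $\rho(\cdot\,\hbar^\gamma)$, which discards the (small, by regularization) part of the state that has either wandered into the nonlinear region or spread too far.

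Then I would follow the regularized piece $\psi^{a_\epsilon^+}$: under the dilation flow it becomes, after time $\tfrac12\log\tfrac1\hbar$, a WKB state carried by a neighbourhood of the unstable manifold of the origin inside $\Lambda$, with amplitude the pushforward of $a$ — the reciprocal change of variable inherent in that pushforward being exactly what produces the combination $a(1/\mu)\tfrac1\mu$. I would propagate this state along the regular part of $\Lambda$ by ordinary WKB/stationary‑phase (or Egorov) estimates, the phase accumulating the action $S^+=\int_\Lambda p\,dq$ and the Maslov factor $e^{i\sigma\pi/2}$ from the caustics along $\Lambda$. Finally, near $(p_0,q_0)$, after conjugation by $\tilde M$ the incoming leaf lies along the stable direction and contracts at the rate $\mu_0$; after the additional time $\tfrac1{\mu_0}\log\tfrac1\hbar$ it re‑localizes to a coherent state whose symbol is obtained by composing the pushforward amplitude $a(1/\cdot)\tfrac1\cdot\rho(\cdot\hbar^\gamma)$ with $\tilde M$, i.e.\ $b_+=\tilde M\,a(1/\cdot)\tfrac1\cdot\rho(\cdot\hbar^\gamma)$. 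Collecting the phases gives $e^{i(S^++\sigma\pi/2)/\hbar}\psi^{b_+}$, with errors $O(\hbar^{\gamma/2})$ from the normal‑form and WKB steps and $O(\epsilon)$ from writing $a=a_\epsilon^++a_\epsilon^-$ (the piece $a_\epsilon^-$ heads toward negative $x$, away from $\Lambda$, and is negligible).

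The main obstacle is the uniform control of the evolution over a time $\sim\log\tfrac1\hbar$ at the two transition zones, where the spreading packet leaves the domain of validity of the origin normal form and, later, enters that of the $(p_0,q_0)$ normal form: there errors can be amplified by factors $e^{ct}$, i.e.\ by powers of $1/\hbar$. The exponent $\gamma<\tfrac15$ and the cut‑off $\rho(\cdot\,\hbar^\gamma)$ are calibrated exactly to balance this — the part of the wave function carried into the nonlinear/overspread regime has $L^2$‑norm $O(\hbar^{\gamma/2})$, while the complement is propagated accurately, the bound $\gamma<\tfrac15$ coming from requiring that the cubic correction, integrated over time $\sim\log\tfrac1\hbar$, stay below $\hbar^{\gamma/2}$. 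A secondary difficulty is matching amplitude and phase across the three regimes so as to pin down the precise operator $\tilde M$ and the Maslov index $\sigma$; this is where the heteroclinic case genuinely differs from the homoclinic one, the symmetry that turned the reconnection into a Fourier transform being lost and replaced by the rotation $\tilde M$ of angle $\theta_0$.
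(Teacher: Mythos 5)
The paper itself contains no proof of this statement: it is quoted from \cite{TP2}, and the only in-text support is the machinery displayed earlier (the fundamental lemma, the $x\xi$ normal form with its explicit dilation propagator, the homoclinic theorem and the operator $U$). Your sketch is consistent with all of that: the three-regime decomposition (spreading at the origin, transport along $\Lambda$ accumulating the action $S^+$ and the Maslov factor $e^{i\sigma\pi/2}$, recontraction at $(p_0,q_0)$ after conjugation by the metaplectic rotation $\tilde M$), the use of $\rho(\cdot\,\hbar^\gamma)$ together with the fundamental lemma to control normal-form remainders over logarithmic times, the origin of the $O(\epsilon)$ term in the decomposition $a=a_\epsilon^++a_\epsilon^-$, and the observation that the Fourier transform of the homoclinic $b_\pm$ is replaced here by $\tilde M$ all match the template the paper sets up for the case of the ``8''. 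At the level of strategy there is nothing to object to.

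Two places where the sketch is genuinely thin, and which are the load-bearing steps of the actual proof. First, your own timing heuristic does not reproduce the stated coefficient: contracting a state of size $O(1)$ down to width $\sqrt\hbar$ at rate $\mu_0$ takes $\frac{1}{2\mu_0}\log\frac{1}{\hbar}$, not $\frac{1}{\mu_0}\log\frac{1}{\hbar}$, and the stated $t_\hbar$ does not reduce to the homoclinic $\log\frac{1}{\hbar}-t_0$ when the arrival point has the same hyperbolicity as the origin; so the matching of the two intrinsic clocks, and the identification of $t_0$ (which you leave as ``a constant absorbing the finite traversal''), is precisely the computation that must be carried out and is not. Second, the passage through the two transition zones --- where the state is neither a coherent state nor a clean WKB state on $\Lambda$, and where the two local normal forms must be glued along the regular part of $\Lambda$ --- is the real analytic content; asserting that $\gamma<\frac15$ is ``calibrated exactly to balance'' the exponential amplification of errors states the desired conclusion rather than proving it. Since \cite{TP2} is not reproduced in the paper, these points cannot be checked against the source, but they are where a complete argument would have to do its work.
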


Le théorème précédent permet clairement de considérer une jonction hétérocline quelconque. Regardons quelques exemples.

\subsection{Le pendule}
Considérons le hamiltonien suivant
\[
h^{PENDULE}(p,q)=\frac {p^2} 2+cosq-1
\]
Nous le transformerons tout d'abord, comme précédemment, en :
\[
h(p,q)=\frac{(p+q)^2}4+cos\frac{(p-q)}{\sqrt2}-1
\]
Donc $h(p,q)\sim pq$ près de l'origine.

Soit $\Lambda:=\{m(1,1)\in\mathbb Z^2, m\in\bbZ\}$ et $\mathcal O^n$ l'ensemble de tous les chemins connexes dans $\Lambda$ partant de  l'origine et de longueur $n$.
Un élément de 
$\mathcal O$ est une suite $(\lambda_i)_{i=0\dots n}/\lambda_0=(0,0),\  |\lambda_{i+1}-\lambda_i|=\sqrt 2$. Soit encore une fois $\psi^a_{(q,p)}$ défini par (\ref{ec}).
\begin{theorem}\cite{TP2}.
\[
e^{-i\frac{nt_\hbar H}\hbar}\psi^a_{(0,0)}=\sum_{\lambda\in\mathcal O^n}e^{iS_\lambda/\hbar}\psi^{a_\lambda}_{\lambda_n}+O(\hbar^{\gamma/2}(log\frac 1 \hbar)^{n/2}),
\]
où $S_\lambda=\int_{\tilde{\lambda}}pdq$ , avec $\tilde{\lambda}$ la courbe dans $\{h(p,q)=0\}$ joignant les points de $\lambda$, et 
\[
a_\lambda=\Pi_{i=1}^nT_{\lambda_i}:=T_\lambda a
\]
avec 
\[
T_{\lambda_i}b(\eta)=\int_0^{+\infty}e^{-i\mu\eta}a(\frac 1 \mu)\frac 1 \mu\rho(\mu\hbar^\gamma)d\mu
\]
if $\lambda_{i+1}-\lambda_i>0$ et 
\[
T_{\lambda_i}b(\eta)=\int_0^{-\infty}e^{+i\mu\eta}a(\frac 1 \mu)\frac 1 \mu\rho(-\mu\hbar^\gamma)d\mu
\]
si $\lambda_{i+1}-\lambda_i<0$.
\end{theorem}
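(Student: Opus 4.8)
The plan is to argue by induction on $n$, taking the single-step heteroclinic reconstruction theorem of Section~\ref{het} as the elementary move and using the lattice $\Lambda$ only to bookkeep the branching. First I would normalise the geometry: after the explicit unitary conjugations already invoked (translating a fixed point to the origin and applying the metaplectic rotation $\tilde M$ that brings the Hessian to $\operatorname{diag}(\mu_0,1/\mu_0)$), one may assume $h(p,q)=pq+O(|(p,q)|^3)$ near each fixed point of the reduced pendulum Hamiltonian; by $2\pi$-periodicity in $q$ all the reduced fixed points are mutually conjugate, so the local model, the constant $t_0$ and the admissible exponent $\gamma$ are the same at every $\lambda\in\Lambda$, and at each of them the splitting is balanced, $\mu_0=1$, whence $t_\hbar=(\tfrac12+\tfrac12)\log\tfrac1\hbar-t_0$ and $\tilde M=\mathrm{Id}$. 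The unstable manifold of the origin consists of exactly two separatrix arcs, running to the two neighbours $\pm(1,1)$, and these correspond respectively to the two halves $\psi^{a_\epsilon^+},\psi^{a_\epsilon^-}$ of a coherent state sitting at the fixed point; this is precisely why $\mathcal O^n$ is the set of length-$n$ nearest-neighbour walks on $\Lambda$ issued from $0$, and why at each step $\tilde\lambda$ is the unambiguous separatrix arc joining consecutive sites along the direction of travel.

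The base case $n=1$ is the heteroclinic theorem itself: write $a=a_\epsilon^++a_\epsilon^-$, apply it to each half at the origin, let $\epsilon\to0$, and collect $e^{-i\frac{t_\hbar H}\hbar}\psi^a_{(0,0)}=\sum_{\lambda\in\mathcal O^1}e^{iS_\lambda/\hbar}\psi^{a_\lambda}_{\lambda_1}+O(\hbar^{\gamma/2})$ with $a_\lambda=T_{\lambda_1}a$ and $S_\lambda=\int_{\tilde\lambda}p\,dq$ (the two Maslov corrections $\sigma\pi/2$ being absorbed into $S_\lambda$). For the induction step I would apply $e^{-i\frac{t_\hbar H}\hbar}$ to the level-$(n-1)$ formula: the remainder is moved by a unitary and keeps its norm, while each coherent-state term $\psi^{a_\lambda}_{\lambda_{n-1}}$ is, up to $O(\hbar^\infty)$ by locality of coherent states, conjugated so that $\lambda_{n-1}$ sits at the origin where $H$ is in normal form, split into its two halves there, and fed once more into the single-step theorem; each half reconstructs at one of the two neighbours of $\lambda_{n-1}$, i.e. at the final site $\lambda_n$ of one of the two extensions of $\lambda$ to a walk in $\mathcal O^n$, with symbol $T_{\lambda_n}a_\lambda$ and phase incremented by the action of the newly traversed arc. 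Since the extension map $\mathcal O^{n-1}\times\{+,-\}\to\mathcal O^n$ is a bijection, re-indexing yields $\sum_{\lambda\in\mathcal O^n}e^{iS_\lambda/\hbar}\psi^{a_\lambda}_{\lambda_n}$ with $a_\lambda=T_\lambda a=\prod_iT_{\lambda_i}a$ and $S_\lambda=\int_{\tilde\lambda}p\,dq$ along the whole chain.

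The main obstacle is the error bookkeeping. Part of it is harmless: each application of the single-step theorem produces one fresh remainder of size $O(\hbar^{\gamma/2})$, so at level $n$ there are at most $|\mathcal O^{n-1}|=2^{n-1}$ of them, contributing $O(2^{n-1}\hbar^{\gamma/2})$, which is absorbed into $O(\hbar^{\gamma/2}(\log\tfrac1\hbar)^{(n-1)/2})$ since $2^{n-1}\le(\log\tfrac1\hbar)^{(n-1)/2}$ for small $\hbar$; the restriction $n\le C\frac{\log\frac1\hbar}{\log\log\frac1\hbar}$ is exactly what keeps the whole iteration finite-step with controlled cumulative blow-up. The genuine difficulty is that the heteroclinic theorem is stated for a coherent state with a \emph{fixed} Schwartz symbol, whereas after one reconstruction the symbol is $T_\lambda a$, which carries the cut-off $\rho(\cdot\,\hbar^\gamma)$ and is therefore only Schwartz with $\hbar$-dependent seminorms (its derivatives and decay degrade by powers of $\hbar^{-\gamma}$, and some $L^2$ mass is lost, as already visible in the $\|U\alpha\|=\|\alpha\|+o(\hbar^{\gamma/2})$ Lemma, which is not exact unitarity). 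So the induction must be driven by a quantitative form of the heteroclinic theorem whose remainder is controlled by a few $\hbar$-weighted seminorms of the input symbol, and one must check that each transfer operator $T_{\lambda_i}$ — a Fourier transform composed with the inversion $\mu\mapsto1/\mu$ and the cut-off — inflates those seminorms by at most a fixed power of $\log\tfrac1\hbar$ per step; I expect the $(\log\tfrac1\hbar)^{1/2}$-per-iteration factor in the final bound to originate precisely here, morally the $\hbar^{1/2}$ transverse width of the packet stretched by the time-$t_\hbar\sim\log\tfrac1\hbar$ hyperbolic flow (Jacobian $\sim\hbar^{-1}$) measured against the $\hbar^\gamma$ cut-off scale. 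Finally, for intermediate times the evolved state is a Lagrangian distribution carried by the separatrix network whose branches are microlocally separated away from the fixed points, so no destructive interference enters the estimates, and at a common terminal site the finitely many reconstructions indexed by distinct walks simply add up.
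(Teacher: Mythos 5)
Your strategy --- iterate the single-step heteroclinic theorem of Section~\ref{het}, use the translation invariance of the reduced pendulum Hamiltonian to identify the local model (exactly $uv$ after translation, so no extra metaplectic rotation) at every site of $\Lambda$, and bookkeep the branching by nearest-neighbour walks --- is exactly the route the paper intends: the pendulum is introduced immediately after the heteroclinic theorem with the remark that ``le th\'eor\`eme pr\'ec\'edent permet clairement de consid\'erer une jonction h\'et\'erocline quelconque'', and the error $O(\hbar^{\gamma/2}(\log\frac 1\hbar)^{n/2})$ is the same as in the homoclinic iteration theorem. Note, however, that this expos\'e contains no proof of the statement (it is quoted from \cite{TP2}), so the comparison can only be against the architecture the paper displays; against that architecture your combinatorial part is sound (the bijection $\mathcal O^{n-1}\times\{+,-\}\to\mathcal O^n$, the accumulation of actions along $\tilde\lambda$, the absorption of the $2^{n-1}$ fresh one-step remainders into $(\log\frac1\hbar)^{(n-1)/2}$, the role of $n\leq C\frac{\log\frac1\hbar}{\log\log\frac1\hbar}$).

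The genuine gap is the one you flag yourself and then leave open. The heteroclinic theorem is stated for a coherent state with a fixed, $\hbar$-independent Schwartz symbol, whereas from the second step on the input symbol is $T_{\lambda_1}\cdots T_{\lambda_{i}}a$, which carries the cut-off $\rho(\cdot\,\hbar^\gamma)$ and a logarithmically divergent contribution from $\int\frac{d\mu}{\mu}$ up to the scale $\mu\sim\hbar^{-\gamma}$. Writing that one ``must check'' that each $T_{\lambda_i}$ inflates the relevant seminorms by at most $(\log\frac1\hbar)^{1/2}$ and that the one-step remainder is controlled by those seminorms is to name the entire analytic content of the theorem rather than to supply it: the lemma $\|U\alpha\|_{L^2}=\|\alpha\|_{L^2}+o(\hbar^{\gamma/2})$ controls only the $L^2$ norm, so the factor $(\log\frac1\hbar)^{n/2}$ has to be traced through whichever higher, $\hbar$-weighted seminorms actually govern the remainder of a single propagation step, and a uniform, symbol-quantitative version of the heteroclinic theorem doing this is precisely what is missing from your sketch. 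Until that lemma is stated and proved, the induction does not close; everything else in your proposal is a correct reduction to it.
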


\subsection{Le cas Harper}

Le modèle de Harper est donné par le hamiltonien
\[ 
h^{HARPER}(p,q):=cos(p)-cos(q)
\]
Par un simple changement de variable il peut être ramené à :
\[
h(p,q):=\pi^2(cos((p+q)/2\pi)-cos((p-q)/2\pi))
\]
avec $h(p,q)\sim pq$ près de zéro.

\begin{theorem}\cite{TP2}. 
Soit $\OE^n$ (pour \OE dipe) l'ensemble des chemins $\Gamma$ sur $\mathbb Z^2$ partant de  $(0,0)$ et ne contenant aucune segment de longueur plus grande que $1$. 
Soit $\Gamma(n)$ l'extrémité de $\Gamma$ et $\Gamma_i$ un vertex de $\Gamma$. Soit $t_\hbar=log\frac1 h\hbar$. Alors
\[
e^{-i\frac{nt_\hbar H}\hbar}\psi^a_{(0,0)}=\sum_{\Gamma\in\OE^n}e^{iS_\Gamma/\hbar}\psi^{a_\Gamma}_{\Gamma(n)}+O(\hbar^{\gamma/2}(log\frac 1 \hbar)^{n/2}),
\]
où $S_\Gamma=\frac 1 2 \int_{\tilde{\Gamma}} pdq-qdp$, où $\tilde{\Gamma}$ est chemin dans $\mathbb R^2$ consistant en les segments joignant les points de $\Gamma$ et
\[
a^\Gamma=\Pi_{i=1}^n V^{\Gamma_i} a:=V_\Gamma a
\]
où :
\begin{itemize}
\item 
$
V^{\Gamma_i}a(\eta)=a(1/\eta)\frac{1}\eta\rho(\eta\hbar^\gamma)
$
si $(\Gamma_{i-1},\Gamma_i)$  est horizontal  orienté à droite,
$
V^{\Gamma_i}a(\eta)=-a(-1/\eta)\frac{1}\eta\rho(-\eta\hbar^\gamma)
$
si $(\Gamma_{i-1},\Gamma_i)$ est horizontal  orienté à gauche,
\item
$
V^{\Gamma_i}a(\eta)=\int_0^\infty e^{-i\eta\mu}a(1/\mu)\rho(\mu\hbar^\gamma)\frac{d\mu}\mu
$
si $(\Gamma_{i-1},\Gamma_i)$  est vertical  orienté vers le haut, 
\item
$
V^{\Gamma_i}a(\eta)=\int_0^{-\infty} e^{i\eta\mu}a(1/\mu)\rho(-\mu\hbar^\gamma)\frac{d\mu}\mu
$
si $(\Gamma_{i-1},\Gamma_i)$  est horizontal  orienté à droite.
\end{itemize} 

\end{theorem}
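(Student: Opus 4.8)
The strategy is to reduce the Harper model to an $n$-fold iteration of the single-junction heteroclinic result, exactly as the pendulum theorem was obtained from the heterocline theorem. The energy surface $\{h(p,q)=0\}$ of the normalised Harper Hamiltonian $h(p,q)=\pi^2(\cos((p+q)/2\pi)-\cos((p-q)/2\pi))$ is a square lattice of separatrices: its singular set is precisely $\mathbb Z^2$, and every fixed point in $\mathbb Z^2$ is hyperbolic with $h(p,q)\sim pq$ near the origin (and, after an integer translation, near every lattice point). The crucial structural fact — and the reason the combinatorial object is $\OE^n$ rather than the full path set $\calO^n$ of the pendulum — is that from a given vertex the separatrix emanates along the four coordinate half-axes toward the \emph{four} neighbouring lattice points, but the four local branches come in two geometric types (``horizontal'' edges, which are pure-position translations, and ``vertical'' edges, which carry a Fourier transform); a connected path in the energy surface cannot traverse two consecutive collinear segments of combined length $>1$ without leaving $\{h=0\}$, whence the restriction defining $\OE^n$.

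First I would establish the one-step statement: starting from a coherent state $\psi^a_{(0,0)}$, after time $t_\hbar=\log\frac1\hbar$ the evolution $e^{-i t_\hbar H/\hbar}\psi^a_{(0,0)}$ splits, modulo $O(\hbar^{\gamma/2})$, into a sum over the four edges leaving the origin, with weights $e^{iS_\Gamma/\hbar}\psi^{V^{\Gamma_1}a}_{\Gamma(1)}$. For each edge this is an instance of the heterocline theorem \cite{TP2} (the one stated just above for $e^{-it_\hbar H/\hbar}\psi^{a_\epsilon^+}$), applied after the explicit symplectic rotation that brings the relevant branch of $\Lambda$ into tangency with $\bbR^+$. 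The four transport operators $V^{\Gamma_i}$ are then read off from that theorem: when the rotation angle $\theta_0$ is $0$ (mod $\pi$) the metaplectic operator $\tilde M$ is (plus or minus) the identity and $b_+=\pm a(1/\cdot)\frac1\cdot\rho(\cdot\hbar^\gamma)$, giving the two ``horizontal'' formulas; when $\theta_0=\pm\pi/2$, $\tilde M$ is (up to phase) the Fourier transform, which turns $a(1/\mu)\frac1\mu\rho(\mu\hbar^\gamma)$ into $\int_0^{\pm\infty}e^{\mp i\eta\mu}a(1/\mu)\rho(\pm\mu\hbar^\gamma)\frac{d\mu}\mu$, giving the two ``vertical'' formulas. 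The Maslov index $\sigma$ of each branch and the action $\int_\Lambda p\,dq$ assemble into the phase $S_\Gamma=\frac12\int_{\tilde\Gamma}(p\,dq-q\,dp)$ once one symmetrises $p\,dq$ over the whole path.

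Next, the iteration. I would propagate by steps of length $t_\hbar$: after the first step the state is a finite sum of coherent states sitting at the lattice neighbours of the origin, each locally (after translation) of the same form $\psi^{b}_{\Gamma_1}$ to which the one-step result reapplies. Each application multiplies the error by the operator-norm growth $O((\log\frac1\hbar)^{1/2})$ of the single-step map on $L^2$ (this is the near-unitarity statement for $U$, i.e. $\|U\alpha\|=\|\alpha\|+o(\hbar^{\gamma/2})$, together with the fact that the number of emerging branches is bounded), so after $n$ steps the cumulative error is $O(\hbar^{\gamma/2}(\log\frac1\hbar)^{n/2})$; tracking which sequences of edges are admissible — no collinear concatenation of length $>1$ — produces exactly the index set $\OE^n$, and composing the transports along a path $\Gamma$ gives $a^\Gamma=V_\Gamma a=\prod_{i=1}^n V^{\Gamma_i}a$ with additive action $S_\Gamma$.

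The main obstacle is the \emph{book-keeping at the vertices under iteration}: when a wave packet arrives at a lattice point $\Gamma_i$ along, say, a vertical branch, one must re-centre by the integer translation $(p,q)\mapsto(p,q)-\Gamma_i$ (a unitary conjugation that contributes the incremental action and Maslov terms), re-apply the symplectic rotation appropriate to the \emph{next} branch, and verify that the ``symbol'' $b$ produced by the previous $V^{\Gamma_i}$ is still regular enough (Schwartz-like away from the singular direction) for the heterocline theorem to apply again — this is where the cut-off $\rho(\cdot\hbar^\gamma)$ is indispensable and where one must check that the $\epsilon$-regularisation errors of the single-junction theorem do not accumulate catastrophically, i.e. that $n\le C\frac{\log\frac1\hbar}{\log\log\frac1\hbar}$ keeps $\hbar^{\gamma/2}(\log\frac1\hbar)^{n/2}=o(1)$. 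Establishing uniformity of the implied constants over all $\Gamma\in\OE^n$, so that the finite sum of $O(\hbar^{\gamma/2}(\log\frac1\hbar)^{n/2})$ errors (whose cardinality is only polynomially large in $\log\frac1\hbar$) remains of the stated order, is the remaining technical point, handled exactly as in the pendulum case \cite{TP2}.
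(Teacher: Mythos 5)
The paper never proves this statement: it is quoted from \cite{TP2}, so there is no internal argument to measure yours against. That said, your outline is exactly the strategy the paper's sequence of results is built to suggest --- each lattice edge of the Harper separatrix grid is one instance of the heteroclinic-junction theorem, with the metaplectic factor $\tilde M$ trivial (up to sign) for the two horizontal transports and equal, up to phase, to the Fourier transform for the two vertical ones (which is precisely how the four formulas for $V^{\Gamma_i}$ arise from $b_+=\tilde M a(\frac1\cdot)\frac1\cdot\rho(\cdot\,\hbar^\gamma)$), and the $n$-fold composition is controlled by the near-unitarity lemma for the one-step map, in complete parallel with the pendulum theorem on the diagonal lattice. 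As it stands, however, your text is a plan whose substantive content is deferred to the very results being surveyed, and two of its quantitative claims need repair. First, $\OE^n$ has roughly $4^n$ elements, which for $n\sim C\log\frac 1\hbar/\log\log\frac 1\hbar$ is \emph{not} polynomial in $\log\frac 1\hbar$; a triangle-inequality summation of $4^n$ one-branch errors of size $\hbar^{\gamma/2}(\log\frac 1\hbar)^{n/2}$ only yields $O(\hbar^{\gamma/2}(\log\frac 1\hbar)^{n})$, so recovering the stated exponent $n/2$ requires either the near-orthogonality of wave packets centred at distinct lattice sites (branch errors adding in quadrature) or a readjustment of the admissible constant $C$; you cannot simply assert that the sum over paths is harmless. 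Second, after a vertical step the symbol $V^{\Gamma_i}a$ is an oscillatory integral that is no longer Schwartz near the singular direction, so re-applying the one-step theorem at the next vertex needs the $\epsilon$-regularised version with the cut-off $\rho(\cdot\,\hbar^\gamma)$ and an $\hbar$-dependent choice of $\epsilon$ preventing the $O(\epsilon)$ losses from accumulating over $n$ iterations; you correctly name this as the main obstacle but leave it unresolved, and it is precisely where the actual analysis of \cite{TP2} lives.
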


Une autre manière d´énoncer le même  phénomène est le résultat suivant, inspiré par les  ``intégrales de chemins" de Feynman.
\begin{corollary}\cite{TP2}. 
Soit $n\leq C\frac{log\frac 1 \hbar}{loglog\frac 1 \hbar}$. Considérons éléments de matrice
\[
<\psi^a_{(0,0)},e^{-i\frac{nt_\hbar H}\hbar}\psi^b_{(p,q)}>.
\]
Ceux-ci aurons une limite semiclassique non-nulle seulement si $(p,q)=(i,j)\in\bbZ^2$ et
\[
<\psi^a_{(0,0)},e^{-i\frac{nt_\hbar H}\hbar}\psi^b_{(i,j)}>=\sum_{\Gamma\in\OE,\ \Gamma(n)=(i,j)}e^{iS_\Gamma/\hbar}<a,V_\Gamma b>+
O(\hbar^{\gamma/2}(log\frac 1 \hbar)^{n/2}).
\]
la somme devant être comprise comme étant nulle s'il n'y pas de chemin satisfaisant
 $\Gamma(n)=(i,j)$.
\end{corollary}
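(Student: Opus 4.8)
The strategy is to translate the evolution theorem for the Harper Hamiltonian (the immediately preceding theorem) into a statement about matrix elements, and to identify which coherent states $\psi^b_{(p,q)}$ survive pairing with the reconstructed state in the semiclassical limit. First I would invoke the Harper reconstruction theorem to write
\[
e^{-i\frac{nt_\hbar H}\hbar}\psi^b_{(p,q)}=\sum_{\Gamma\in\OE^n}e^{iS_\Gamma/\hbar}\psi^{b_\Gamma}_{\Gamma(n)}+O(\hbar^{\gamma/2}(\log\tfrac1\hbar)^{n/2}),
\]
this time applied with the initial symbol $b$ at the base point $(p,q)$ rather than at the origin — the theorem as stated is at $(0,0)$, but translation in phase space is implemented by an explicit unitary (as recalled in Section~\ref{het}), so one conjugates and the path set $\OE^n$ is simply translated to start at $(p,q)$. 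Then
\[
\langle\psi^a_{(0,0)},e^{-i\frac{nt_\hbar H}\hbar}\psi^b_{(p,q)}\rangle
=\sum_{\Gamma}e^{iS_\Gamma/\hbar}\langle\psi^a_{(0,0)},\psi^{b_\Gamma}_{\Gamma(n)}\rangle
+O(\hbar^{\gamma/2}(\log\tfrac1\hbar)^{n/2}),
\]
the error being controlled by Cauchy–Schwarz since $\|\psi^a_{(0,0)}\|=1$.

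The heart of the matter is the overlap $\langle\psi^a_{(0,0)},\psi^{c}_{(i,j)}\rangle$ of two coherent states centered at lattice points. From the defining formula \eqref{ec}, such an overlap is
\[
\hbar^{-n/2}\int \overline{a\Big(\tfrac{x}{\sqrt\hbar}\Big)}\,c\Big(\tfrac{x-i}{\sqrt\hbar}\Big)e^{i\frac{jx}\hbar}\,dx,
\]
and after the change of variable $x=\sqrt\hbar\,y$ this becomes $\int\overline{a(y)}\,c\big(y-\tfrac{i}{\sqrt\hbar}\big)e^{ij y/\sqrt\hbar}\,dy$. When $(i,j)\neq(0,0)$ either the spatial translation $i/\sqrt\hbar\to\infty$ pulls the two Schwartz profiles apart (giving $O(\hbar^\infty)$ by rapid decay), or, if $i=0$ but $j\neq0$, the oscillatory factor $e^{ijy/\sqrt\hbar}$ forces the integral to $O(\hbar^\infty)$ by non-stationary phase. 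Hence only $(p,q)=(i,j)\in\bbZ^2$ with $\Gamma(n)=(i,j)$ contributes a nonzero limit, and for those the overlap is $\langle a,c\rangle+O(\hbar^\infty)$; with $c=V_\Gamma b$ we get exactly $\langle a,V_\Gamma b\rangle$. Summing over $\Gamma$ with endpoint $(i,j)$ and absorbing everything into the stated error term finishes the proof; when no such $\Gamma$ exists the sum is empty, consistent with the claimed vanishing.

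The main obstacle is not conceptual but bookkeeping: one must check that the number of paths in $\OE^n$ grows only polynomially (it does — bounded by a constant to the power $n$, which is absorbed since $n\lesssim\log\tfrac1\hbar/\log\log\tfrac1\hbar$), so that summing the $O(\hbar^{\gamma/2}(\log\tfrac1\hbar)^{n/2})$ errors and the $O(\hbar^\infty)$ off-lattice overlaps does not spoil the estimate; and one must verify that the translation unitaries carrying the theorem from base point $(0,0)$ to $(p,q)$ act correctly on the symbols $V_\Gamma b$ and only contribute the phases $S_\Gamma$ already present, i.e.\ that conjugating by phase-space translation merely shifts the path lattice without altering the combinatorial weights. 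Granting the preceding Harper theorem and the explicit form of the metaplectic/translation unitaries from Section~\ref{het}, both points are routine.
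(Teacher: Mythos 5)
The paper states this corollary without proof (it is imported from \cite{TP2}), so your argument must stand on its own; its overall shape --- reduce to the Harper reconstruction theorem, compute overlaps of coherent states at distinct centres, control the number of paths --- is the right one, but the first step contains a genuine gap. You apply the reconstruction theorem to $e^{-int_\hbar H/\hbar}\psi^b_{(p,q)}$ for \emph{arbitrary} $(p,q)$ by conjugating with the phase-space translation and asserting that the path set $\OE^n$ is ``simply translated to start at $(p,q)$''. Conjugating by the translation replaces $H$ by the quantization of $h(\cdot+(p,q))$, and this is again a Harper-type Hamiltonian with a hyperbolic critical point at the origin only when $(p,q)$ lies in the period lattice of $h$, i.e.\ precisely when $(p,q)\in\bbZ^2$ in the paper's normalization. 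For $(p,q)\notin\bbZ^2$ the translated symbol has no fixed point at the origin, the reconstruction theorem does not apply, and your expansion is unavailable --- yet this is exactly the case in which the corollary asserts a vanishing semiclassical limit, so the ``seulement si $(p,q)\in\bbZ^2$'' half of the statement is left unproven. The natural repair is to move the propagator onto the left entry, $\langle\psi^a_{(0,0)},e^{-int_\hbar H/\hbar}\psi^b_{(p,q)}\rangle=\langle e^{+int_\hbar H/\hbar}\psi^a_{(0,0)},\psi^b_{(p,q)}\rangle$, apply the (time-reversed, equally valid since $-h^{HARPER}$ is again of Harper type) theorem at the origin so that the evolved state is a superposition of coherent states sitting at lattice points, and observe that every overlap with $\psi^b_{(p,q)}$ is then negligible when $(p,q)\notin\bbZ^2$; one must afterwards check that reversing the paths converts the transported symbols into the adjoints needed to produce $\langle a,V_\Gamma b\rangle$ rather than $\langle V_{\Gamma}a,b\rangle$, a relabelling you also skip.

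Two further points need attention even in the lattice case. First, the symbols $V_\Gamma b$ are not Schwartz uniformly in $\hbar$: they are of the form $b(1/\eta)\frac1\eta\rho(\eta\hbar^\gamma)$ (and oscillatory integrals thereof), singular and carrying an $\hbar$-dependent cutoff --- the paper itself insists that the reconstructed symbols are singular. The $O(\hbar^\infty)$ bound for overlaps of states at distinct centres must therefore come from the explicit support $|\eta|\le 2\hbar^{-\gamma}$ (hence spatial width $O(\hbar^{1/2-\gamma})$, small compared with the lattice spacing for $\gamma<1/2$) and from quantitative decay, not from a blanket rapid-decay/non-stationary-phase argument for fixed Schwartz profiles. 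Second, $\#\OE^n$ is exponential in $n$, not polynomial as you write; the conclusion survives because $C^n=\hbar^{-C'/\log\log(1/\hbar)}=O(\hbar^{-\epsilon})$ for every $\epsilon>0$ in the regime $n\le C\log\frac1\hbar/\log\log\frac1\hbar$, but this is precisely the computation that must be displayed to see that the accumulated errors remain $O(\hbar^{\gamma/2}(\log\frac1\hbar)^{n/2})$.
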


\section{Généralisations}

Les exemples traités plus haut se généralisent à des espaces de Hilbert qui ne sont pas du type $L^2$. C'est le cas, par exemple, des spins. L'espace de Hilbert (de la
partie ``spin") d'une particule de spin $\frac 1 2 $ est $\bbC²$. C'est un des espaces de représentation du groupe des rotations de $\bbR³$.

Un autre généralisation qui nous semble plausible est le cas de systèmes intégrables. 
Dans le cas d'un système intégrable quantique où l'on peut écrire le hamiltonien  comme fonction d'opérateurs qui commutent entre eux et de spectres linéaires, alors le
résultat doit être immédiat, et la généralisation des résultats de la section \ref{sta} facile.

L'autre cas est celui de tores singuliers, présentant des points hyperboliques. Un exemple est traité dans \cite{DP}.

\section{Indéterminisme quantique et sensibilité aux conditions initiales}
Comme nous l'avons mentionné dans l'introduction, le processus de mesure de la \mq est aussi une façon de ``retrouver" le monde classique. Rappelons brièvement que la mesure
quantique est associée à une observable quantique $\mathcal O$ (opérateur auto-adjoint) et que le résultat d'une mesure effectuée peut être un quelconque point $\lambda$ 
du spectre de
$\mathcal O$, la probabilité d'obtenir $\lambda$ étant  égale à $|<\psi|\phi_\lambda>|^2$, où $\psi$ est l'état du système au moment où la mesure est
effectuée, et $\phi_\lambda$ le vecteur propre (éventuellement généralisé si le spectre n'est pas ponctuel) associé à $\lambda$, supposée non-dégénérée. Dans le cas où $\mathcal O=Q$,
où $Q$ est l'opérateur de multiplication par la variable $x$, on obtient l'interprétation probabiliste de la fonction d'onde : la probabilité qu'a une particule quantique d'être dans la
position $x$ est égale à $|\psi(x)|^2$.

Cet aspect non-déterministe est à opposer au   caractère fondamentalement déterministe de la mécanique classique, intrinsèquement associée à un flot sur un espace
géométrique absolu. Cependant la remarque, par Poincaré, de l'importance de la sensibilité aux conditions initiales rend souvent inapplicable ce déterminisme. On sait par exemple
que pour savoir si, dans un million d'années, la terre sera encore dans le système solaire, il faudrait connaître avec un précision de quelques centaines de mètres la
position de toutes les planètes du système solaire : une mesure classique tout simplement invraisemblable à obtenir. Dans l'asymptotisme du temps infini cette remarque prend
un sens tout à fait particulier : bien que le flot classique associé à un hamiltonien (régulier) est défini  univoquement  pour tout temps, le flot à temps
infini n'existe pas (en tant que flot). Il intervient néanmoins partout dans la théorie des systèmes dynamiques (ergodicité, sensibilité aux conditions initiales,
propriété de mélange etc), mais de façon indirecte. Faible pour l'ergodicité :
\[
\lim_{T\to\infty}\frac 1 T\int_{-T/2}^{+T/2}f(\phi^t(x))dt=\mu(x)
\]
et implicite pour la sensibilité aux conditions initiales (le flot étant continu pour chaque $t<\infty$) :
\[
\exists I\in\bbR^+ , \forall\epsilon>0,\  \exists t=t(I,\epsilon)\ \mbox{tel que}\ \exists x, |x-y|\leq\epsilon,\ |\Phi^t(x)-\Phi^t(y)|\geq I.
\]

Il y a pourtant une situation où le flot à temps infini peut être partiellement défini : il s'agît de la situation 
d'un point fixe hyperbolique (et plus généralement d'une trajectoire instable dans un système chaotique) : la variété instable d'un point fixe $z_0$, $\Lambda^-_{z_0}$ est en effet définie 
comme l'ensemble des points $z$ tels que $\phi^{+\infty}(z)=z_0$ :
\[
\Lambda^-_{z_0}=\{z/ \phi^{+\infty}(z)=z_0\}.
\]
 On pourrait donc être tenté de ``définir'' le flot inverse de la manière suivante :
\[
\phi^{-\infty}(z_0)=z,\ \forall z\in\Lambda_{z_0},
\]
c'est-à-dire de manière non-déterministe.

Cette définition n'a bien sûr pas de sens à l'intérieur du paradigme classique, mais se retrouve prendre une signification dans le cadre semiclassique : la délocalisation de la
fonction d'onde pour une échelle de temps logarithmique, couplée à l'interprétation probabiliste de celle-ci, donne précisément un sens non-déterministe 
au flot limite à temps infini (voir \cite{TP4} pour une discussion plus détaillée).

 \section{Conclusion : périodes semiclassiques} 

Nous avons vu comment la mémoire des phases permet la reconstruction de paquets d'ondes et induit donc un flot \textit{périodique} dans le temps infini semiclassique.

Nous avons aussi vu que les échelles de temps, pour cette reconstruction, sont différentes pour les cas stable/instable.
\[
\left\{\begin{array}{c}
\hbar^{-1} \ \mbox{pour le cas table}\\
\log{\frac 1 \hbar}\ \mbox{pour le cas instable}\end{array}\right.
\]
Ces échelles de temps, lorsque $\hbar\to 0$ ``disparaissent" dans le point à l'infini du temps classique. Les exemples que nous avons traités font donc appréhender une 
approche du
temps infini en mécanique classique dans laquelle le ``point" à l'infini est décomposé en différentes échelles, suggérant un nouveau système dynamique (vraiment semiclassique et
non-classique). 

Nous avons enfin vu la nécessité pour les états cohérents de n'être plus gaussien, les reconstructions ne préservant pas cette propriété. De plus les symboles des
états reconstruits sont, dans tous les cas, singuliers lorsque considérés ponctuellement.

\end{document}